\numberwithin{equation}{section}
\newtheorem{Theorem}{Theorem}[section]
\newtheorem*{Theorem*}{Theorem}
\newtheorem*{Corollary*}{Corollary}
\newtheorem{Lemma}[Theorem]{Lemma}
\theoremstyle{definition}
\theoremstyle{remark}
\newtheorem{Remark}[Theorem]{Remark}
\newtheorem*{Remark*}{Remark}
\newcommand{\C}{\mathbb{C}}
\newcommand{\Z}{\mathbb{Z}}
\renewcommand{\k}{\mathbbm{k}}
\renewcommand{\Z}{\mathbb{Z}}
\newcommand{\g}{\mathfrak{g}}
\newcommand{\p}{\mathfrak{p}}
\renewcommand{\t}{\mathfrak{t}}
\renewcommand{\sl}{\mathfrak{sl}}
\newcommand{\n}{\mathfrak{n}}
\renewcommand{\O}{\mathcal{O}}
\newcommand{\I}{\mathcal{I}}
\newcommand{\V}{\mathcal{V}}
\renewcommand{\P}{{\mathcal{P}}}
\newcommand{\F}{\mathcal{F}}
\newcommand{\ad}{\operatorname{ad}}
\newcommand{\Ad}{\operatorname{Ad}}
\newcommand{\Lie}{\operatorname{Lie}}
\newcommand{\gr}{\operatorname{gr}}
\newcommand{\End}{\operatorname{End}}
\renewcommand{\i}{{\bf i}}
\renewcommand{\j}{{\bf j}}
\begin{document}
\title[The number of multiplicity-free primitive ideals]{The number of multiplicity-free primitive ideals
associated with the rigid nilpotent orbits}

\author{Alexander Premet and David~I.~Stewart}

\thanks{\nonumber{\it Mathematics Subject Classification} (2000 {\it revision}).
Primary 17B35, 17B50. Secondary 17B20.}
\address{Department of Mathematics, The University of Manchester, Oxford Road, M13 9PL, UK} 
\email{alexander.premet@manchester.ac.uk}
\address{Department of Mathematics, The University of Manchester, Oxford Road, M13 9PL, UK} 
\email{david.i.stewart@manchester.ac.uk}
 \maketitle
\begin{center}
	{\it To Corrado De Concini with admiration}
\end{center}

\begin{abstract}
	\noindent
	Let $G$ be a simple algebraic group defined over $\C$ and let $e$ be a rigid nilpotent element
	in $\g =\Lie(G)$. In this paper we prove that the finite $W$-algebra $U(\g,e)$ admits either one or two $1$-dimensional representations. Thanks to the results obtained earlier this boils down to showing that the finite $W$-algebras associated with the rigid nilpotent orbits of dimension $202$ in the Lie algebras of type ${\rm E}_8$ admit exactly two $1$-dimensional representations.
	 As a corollary, we complete the description of the multiplicity-free primitive ideals 
	of $U(\g)$ associated with the rigid nilpotent $G$-orbits of $\g$. At the end of the paper, we apply our results to enumerate the small irreducible representations of the related reduced enveloping algebras.
	\end{abstract}
\maketitle

\section{Introduction}

Denote by $G$ a simple algebraic group of adjoint type over
$\C$ with Lie algebra $\g=\Lie(G)$ and let  $\mathcal{X}$ be the set of all primitive ideals of the universal enveloping algebra $U(\g)$. We shall identify $\g$ and $\g^*$ by means of an $(\Ad G)$-invariant non-degenerate symmetric bilinear form $(\,\cdot\,,\,\cdot\,)$ of $\g$. 
Given $x\in \g$ we write $G_x$ the centraliser of $x$ in $G$ and write $\g_x:=\Lie(G_x)$.

It is well known that for any finitely generated $S(\g^*)$-module $M$ there exist
prime ideals $\mathfrak{q}_1,\ldots,\mathfrak{q}_n$ containing ${\rm Ann}_{\,S(\g^*)}\,M$ and a chain
$0=R_0\subset R_1\subset\ldots
\subset R_n=R$ of $S(\g^*)$-modules such that $R_i/R_{i-1}\cong S(\g^*)/{\mathfrak q}_i$ for $1\le i\le n$.
Let $\p_1,\ldots,\p_l$ be the minimal elements in the set $\{\mathfrak{q}_1,\ldots,\mathfrak{q}_n\}$. The zero sets $\V(\p_i)$ of the $\p_i$'s in $\g$ are the irreducible components of the support ${\rm Supp}(M)$ of $M$. If $\p$ is one of the $\p_i$'s then we define $m(\p):=\{1\le i\le n\,|\,\,\mathfrak{q}_i=\p\}$ and we call $m(\p)$ the {\it multiplicity} of $\V(\p)$ in ${\rm Supp}(M)$. The formal linear combination $\sum_{i=1}^lm(\p_i)[\p_i]$ is  often referred to as the {\it associated cycle} of $M$ and denoted ${\rm AC}(M)$.

Given $I\in\mathcal{X}$ we can apply the above construction to the $S(\g^*)$-module $S(\g^*)/{\rm gr}(I)$ where $\gr(I)$ is the corresponding graded ideal in
$\gr(U(\g))=S(\g)\cong S(\g^*)$.
The support of $S(\g^*)/\gr(I)$ in $\g$ is called the {\it associated variety} of $I$ and denoted ${\rm V}(I)$. By Joseph's theorem, ${\rm V}(I)$ is the closure of a single nilpotent orbit $\mathcal O$ in $\g$ and, in particular, it is always irreducible. Hence in our situation the set $\{\p_1,\ldots,\p_l\}$ is  the singleton containing $J:=\sqrt{\gr(I)}$ and we have that ${\rm AC}\big(S(\g^*)/\gr(I)\big)=m(J)[J]$. The positive integer
$m(J)$ is referred to the {\it multiplicity} of $\mathcal O$ in $U(\g)/I$ and denoted ${\rm mult}_{\mathcal O}(U(\g)/I)$.  

For a nilpotent orbit $\mathcal O$ in $\g$ we denote by $\mathcal{X}_{\mathcal O}$ the set of all $I\in\mathcal{X}$ with ${\rm V}(I)=\overline{\mathcal O}$. Following \cite{Pr14} we call $I\in\mathcal{X}_{\mathcal O}$ {\it multiplicity-free} if ${\rm mult}_{\mathcal O}(U(\g)/I)=1$ and we say that a $2$-sided ideal $J$ of $U(\g)$ is {\it completely prime} if $U(\g)/J$ is a domain.  

Classification of completely prime primitive ideals of $U(\g)$ is a classical problem of Lie Theory
which finds applications in the theory of unitary representations of complex simple Lie groups. The subject has a very long history and many partial results can be found in the literature. In particular, it is known that any multiplicity-free primitive ideal is completely prime and that the converse fails outside type $\sf A$ for simple Lie algebras of rank $\ge 3$; see \cite{Pr11} and \cite{LP} for more detail.
A description of multiplicity-free primitive ideals in Lie algebras of types $\rm B$, {\rm C} and $\rm D$ was first obtained in \cite{PT14}; that paper also solved the problem fo the majority of induced nilpotent orbits in exceptional Lie algebras.

Fix a nonzero nilpotent orbit $\mathcal O\subset \g$ and let $\{e,h, f\}$ be an $\sl_2$-triple in $\g$ with $e\in{\mathcal O}$. Let $Q$ be the generalised Gelfand--Graev module associated
with $\{e,h, f\}$; see \cite{PT21} for more detail. Let
$U(\g,e):=({\rm End}_\g\, Q)^{\rm op}$, the finite $W$-algebra associated with $(\g,e)$.
If $V$ is a finite dimensional irreducible
$U(\g,e)$-module, then Skryabin's theorem \cite[Appendix]{Pr02} in conjunction with \cite[Theorem~3.1(ii)]{Pr07a} implies $Q\otimes_{U(\g,\,e)}V$ is an irreducible $\g$-module
and its annihilator $I_V$ in $U(\g)$ lies in $\mathcal{X}_{\mathcal O}$. Conversely, any primitive ideal in $\mathcal{X}_{\mathcal O}$ has this form for some finite dimensional
irreducible $U(\g,e)$-module $V$. This result was conjectured in \cite[3.4]{Pr07a} and proved in \cite[Theorem~1.1]{Pr07b} for the primitive ideals admitting rational central characters. In full generality, the conjecture was first proved by Losev; see \cite[Theorem~1.2.2(viii)]{Lo2}. A bit later, alternative proofs were found by Ginzburg in \cite[4.5]{gi} and by the first-named author in \cite[Sect.~4]{Pr10}.
The ideal $I_V$ depends only on the image of $V$ in the set ${\rm Irr}\,U(\g,e)$ of all isoclasses of finite dimensional irreducible $U(\g,e)$-modules. We write $[V]$ for the class of $V$ in ${\rm Irr}\,U(\g,e)$.

It is well-known that
group $C(e):=G_e\cap G_f$  is reductive and its finite quotient
$\Gamma:=C(e)/C(e)^\circ$ identifies with the component group of
the centraliser $G_e$. From the Gan--Ginzburg realization of the finite $W$-algebra $U(\g,e)$ it follows that $C(e)$ acts
on $U(\g,e)$ by algebra automorphisms; see \cite[Theorem~4.1]{GG02}. 
By \cite[Lemma~2.4]{Pr07a}, the connected component
$C(e)^\circ$ preserves any $2$-sided ideal of $U(\g,e)$. As a result,  we have a natural action of $\Gamma$ on ${\rm Irr}\,U(\g,e)$. For $V$ as above, we let $\Gamma_V$ denote the stabiliser of $[V]$ in $\Gamma$. In \cite[4.2]{Lo5}, Losev proved that
$I_{V'}=I_V$ if an only if $[V']=[V]^\gamma$ for some $\gamma\in\Gamma$. In particular, $\dim V=\dim V'$. 
In conjunction with \cite[Theorem~1.3.1(2)]{Lo5}, this result of Losev also implies that
$${\rm mult}_{\mathcal O}(U(\g)/I_V)\,=\,[\Gamma:\Gamma_V]\cdot (\dim V)^2.$$ As a consequence,
a primitive ideal $I_V$ is multiplicity-free if and only if $\dim V=1$ and $\Gamma_V=\Gamma$.
This brings our attention to the set $\mathcal{E}$ of all one-dimensional representations of $U(\g,e)$ and its subset $\mathcal{E}^\Gamma$ consisting of all $C(e)$-stable such representations. 
Since
$\mathcal{E}$ identifies with the maximal spectrum of the largest commutative quotient $U(\g,e)^{\rm ab}$ of $U(\g,e)$, it follows that $\mathcal{E}$ is an affine variety and $\mathcal{E}^\Gamma$ is a Zariski closed subset of $\mathcal{E}$.

If $\g$ is a classical Lie algebra then it is proved in \cite[Theorem~1]{PT14} the variety $\mathcal{E}^\Gamma$ is isomorphic to the affine space $\mathbb{A}^{c_\Gamma(e)}$ where 
$c_\Gamma(e)=\dim (\g_e/[\g_e,\g_e])^\Gamma$ (one should keep in mind here that the connected component of $G_e$ acts trivially on
$\g_e/[\g_e,\g_e]$). This result continues to hold for $\g$ exceptional provided that the orbit $\O$ is induced (in the sense of Lusztig--Spaltenstein) and not listed in \cite[Table~0]{PT14}. That table contains seven induced orbits (one in types ${\rm F}_4$, ${\rm E}_6$, ${\rm E}_7$ and four in type ${\rm E}_8$).

It is also known that $\mathcal{E}\ne\varnothing$ for all nilpotent orbits $\O$ in the finite dimensional simple Lie algebras $\g$ and $\mathcal{E}$ is a finite set if and only if the orbit $\O\subset \g$
is {\it rigid}, that is cannot be induced from a proper Levi subalgebra of $\g$ in the sense of Lusztig--Spaltenstein.
 This was first conjectured in \cite[Conjecture~3.1]{Pr07a}.  Several mathematicians contributed to the proof of this conjecture and we refer to \cite[Introduction]{Pr14} for more detail on the history of the subject.

Furthermore, it is known that $\mathcal{E}^\Gamma\ne \varnothing$ in all cases.
If $e$ is rigid and $\g$ is classical then $\g_e=[\g_e,\g_e]$
by \cite{Ya10}, whilst if $\g$ is exceptional then 
either $\g_e=[\g_e,\g_e]$ or $\g_e=\mathbb{C}e\oplus[\g_e,\g_e]$ and the second case occurs for one rigid orbit in types ${\rm G}_2$, ${\rm F}_4$, ${\rm E}_7$ and for three rigid orbits in type ${\rm E}_8$; see \cite{dG, PS18}. The Bala--Carter labels of these orbits are listed in Table~\ref{t:orbittable}.

\begin{table}[htb]
	\bgroup
	\def\arraystretch{1.5}
	\begin{tabular}{| c | | c | c | c | c | c | c |} 
		\hline
		{\rm Type of} $\Phi$ &  ${\rm G}_2$ &  ${\rm F}_4$ & ${\rm E}_7$ & ${\rm E}_8$ & ${\rm E}_8$ & ${\rm E}_8$ \\
		\hline 
		{\rm Bala--Carter label of} $e$ & $\widetilde{\rm A}_1$ & $\widetilde{\rm A}_2 + {\rm A}_1$ & $({\rm A}_3 + {\rm A}_1)' $ & ${\rm A}_3 + {\rm A}_1$ & ${\rm A}_5 + {\rm A}_1$  & ${\rm D}_5({\rm a}_1) + {\rm A}_2$\\
		\hline 
	\end{tabular}
	\egroup\vspace{6pt}
	\caption{Rigid nilpotent elements with imperfect centralisers.}
	\label{t:orbittable}
	\end{table}
\noindent
Since $\mathcal{E}^\Gamma\ne\varnothing$, it follows from
\cite[Proposition~11]{PT14} that for any simple Lie algebra $\g$ the equality $\g_e=[\g_e,\g_e]$
implies that $\mathcal{E}$ is a singleton. In view of the above we see that for any rigid nilpotent element in a classical Lie algebra the set
$\mathcal{E}=\mathcal{E}^\Gamma$ contains one element, whilst for $\g$ exceptional and $e$ rigid the inequality $|\mathcal{E}|\ge 2$ may occur only for the six orbits listed in Table~\ref{t:orbittable}. 

Let $T$ be a maximal torus of $G$ and $\t=\Lie(T)$. Let $\Phi$ be the root system of $\g$ with respect to $T$ and let $\Pi$ be a basis of simple roots in $\Phi$.  By Duflo's theorem \cite{Du}, any primitive ideal $I\in\mathcal{X}$  has the form 
$I=I(\lambda):={\rm Ann}_{U(\g)}\,L(\lambda)$ for some irreducible highest weight $\g$-modules $L(\lambda)$ with $\lambda\in\t^*$, and all multiplicity-free primitive ideals $I$ constructed in \cite{Pr14} are given in their Duflo realisations.
It is known that if 
$\langle\lambda,\alpha^\vee\rangle\in\Z$ for all $\alpha\in\Pi$ then ${\rm V}(I)$ is the closure of a {\it special} (in the sense of Lusztig) nilpotent orbit in $\g$. One also knows that to any $\sl_2$-triple $\{e,h,f\}$ in $\g$ with $e$ special there corresponds an $\sl_2$-triple $\{e^\vee,h^\vee,f^\vee\}$ in the Langlands dual Lie algebra $\g^\vee$ with $h^\vee\in\t^*$.  As Barbasch--Vogan observed in \cite[Proposition~5.10]{BV}, for $e$  special and rigid there is a unique choice of $h^\vee$ such that
$\langle\frac{1}{2}h^\vee,\alpha^\vee\rangle\in\{0,1\}$ for all $\alpha\in\Pi$. Furthermore, in this case we have that $I(\frac{1}{2}h^\vee-\rho)\in\mathcal{X}_\O$ (here $\rho$ is the half-sum of the positive roots of $\Phi$ with respect to $\Pi$ and $\mathcal O$ is the nilpotent orbit containing $e$).

If $\g$ is classical and $e$ is special rigid, then it follows from \cite{McG} that one of the Duflo realisations of the multiplicity-free primitive ideal in $\mathcal{X}_{\mathcal O}$ is obtained by using
the Arthur--Barbasch--Vogan recipe described above. By \cite[Theorem~A]{Pr14}, this result continues to hold for the special rigid nilpotent orbits in exceptional Lie algebras. (It is worth mentioning here that all nilpotent elements listed in Table\ref{t:orbittable} are non-special.) It was also proved in \cite{Pr14} that for any orbit $\O$ listed in Table~\ref{t:orbittable} the set 
$\mathcal{X}_\O$ contains (at least) two multiplicity-free primitive ideals and their Duflo realisations 
$I(\Lambda)$ and $I(\Lambda')$ were found in all cases by using a method described by Losev in \cite[5.3]{Lo3}.

It should be stressed at this point that in the case of rigid nilpotent orbits in exceptional Lie algebras the set $\mathcal{E}$ was first investigated 
by Goodwin--R{\"o}hrle--Ubly \cite{GRU} and Ubly \cite{U} who relied on some custom GAP code.
In particular, it was checked in \cite{GRU} that $|\mathcal{E}|=2$ for all 
orbits in types ${\rm G}_2$, ${\rm F}_4$ and ${\rm E}_7$ listed in Table~\ref{t:orbittable}. After \cite{GRU} was submitted Ubly has improved the GAP code and was able to check that $|\mathcal{E}|=2$ for the nilpotent orbit in type with Bala--Carter label ${\rm A}_3+{\rm A}_1$ in type  ${\rm E}_8$; see
\cite{U}. This left open the two largest rigid nilpotent orbits (of dimension $202$) in Lie algebras of type ${\rm E}_8$.

The main result of this paper is the following:

\medskip

\noindent {\bf Theorem~A.} {\it If $e$ lies in a nilpotent orbit $\O$ listed in Table~\ref{t:orbittable} then
$|\mathcal{E}|=|\mathcal{E}^\Gamma|=2$. Consequently, the set $\mathcal{X}_\O$ contains two
multiplicity-free primitive ideals.}
	
\medskip

\noindent 
Combined with the main results of \cite{Pr14}, Theorem~A provides a full list of 
all multiplicity-free primitive ideals of $U(\g)$ associated with rigid nilpotent orbits. 
Since $\Gamma=\{1\}$ for all nilpotent elements listed in Table~\ref{t:orbittable}, in order to prove the theorem we just need to show that $|\mathcal{E}|=2$ for the nilpotent elements in Lie algebras of type $\rm{E}_8$ labelled
${\rm A}_5 + {\rm A}_1$  and ${\rm D}_5({\rm a}_1) + {\rm A}_2$. By the proof of Proposition~2.1 in \cite{Pr14} and by \cite[Proposition~5.4]{PT21}, the largest commutative quotient $U(\g,e)^{\rm ab}$ of $U(\g,e)$ is generated by the image of a Casimir element of $U(\g)$ in $U(\g,e)^{\rm ab}$; we call it $c$. Looking very closely at the commutators of certain PBW generators of Kazhdan degree $5$ in $U(\g,e)$ we are able to  show that $\lambda c^2+\eta c +\xi=0$ for some $\lambda\in \C^\times$  and $\eta,\xi\in\C$. This quadratic equation results from investigating certain elements of Kazhdan degree $8$ in the graded Poisson algebra $\P(\g,e)$ associated with the Kazhdan filtration of $U(\g,e)$.

Let $R=\Z[\frac{1}{2},\frac{1}{3},\frac{1}{5}]$. In \cite[4.1]{PT21},  a natural  $R$-form, $Q_R$, of the Gelfand--Graev  module $Q$ was introduced, and it was proved for $e$ rigid that the ring
$U(\g_R,e):=\End_\g(Q_R)^{\rm op}$ has a nice PBW basis over $R$. In the present paper, we use these results to carry out all our computations over the ring $R$. In particular, we show that $\lambda\in R^\times$ and $\eta,\xi\in R$. The explicit form of $\Lambda$ and $\Lambda'$ in \cite[3.16, 3.17]{Pr14} in conjunction with
\cite[Theorem~1.2]{PT21} and \cite[Theorem~2.3]{Pr02} then enables us to obtain the following:

\medskip

\noindent {\bf Theorem~B.} {\it Let $\g_\k=\Lie(G_\k)$ be a Lie algebra of type ${\rm E}_8$ over an algebraically closed field $\k$ of charateristic $p>5$ and let $e$ be a nilpotent element of $\g_\k$ with Bala--Carter label ${\rm A}_5+{\rm A}_1$ or ${\rm D}_5({\rm a}_1)+{\rm A}_2$.
Let $\chi\in \g_\k^*$ be such that
$\chi(x)=\kappa(e,x)$ for all $x\in\g_\k^*$ where $\kappa$ is the Killing form of $\g_\k$.
Then the reduced enveloping algebra $U_\chi(\g_\k)$ has two simple modules of dimension
$p^{d(\chi)}$ where $d(\chi)=101$ is half the dimension of the coadjoint $G_\k$-orbit of $\chi$.}

\medskip

\noindent
We recall that $U_\chi(\g_\k)\,=\,U(\g_\k)/I_\chi$ where $I_\chi$ is the $2$-sided ideal of $U(\g_\k)$ generated by all elements $x^p-x^{[p]}-\chi(x)^p$ with $x\in\g_\k$
(here $x\mapsto x^{[p]}$ is the $[p]$-th power map of the restricted Lie algebra $\g_\k$). 
By the Kac--Weisfeiler conjecture (proved in \cite{Pr95}) any finite-dimensional $U_\chi(\g_\k)$-module has dimension divisible by $p^{d(\chi)}$. 
It would be interesting to prove an analogue of Theorem~B for the first four orbits in Table~\ref{t:orbittable} and to reestablish the remaining results of \cite{GRU} and \cite{U}
by the methods of the present paper.

\bigskip
\noindent {\bf Acknowledgement.} 
The first-named author would like to thank Shilin Yu and Lucas Mason-Brown for asking stimulating questions related to \cite{Pr14}. Thanks also go to
Ross Lawther and Donna Testerman for helpful e-mail
correspondence on the subject of this paper.

\section{Notation and preliminaries}
\label{basic}
Let $G_\Z$ be a Chevalley group scheme of type ${\rm E}_8$ and $\g_\Z=\Lie(G_\Z)$. Let $R=\Z[\frac{1}{2},\frac{1}{3},\frac{1}{5}]$ (recall that $2$, $3$ and $5$ are bad primes for $G_\Z$). 
We set $\g_R:=\g_\Z\otimes_\Z R$, and $\g:=\g_\Z\otimes_\Z\C$.
Let $\Phi$ be the root system
of $G_\Z$ with respect to a maximal split torus $T_\Z$ of $G_\Z$. Let $\Pi=\{\alpha_1,\ldots,\alpha_8\}$ be a set of simple roots in $\Phi$ and write $\Phi_+$ for the set of positive roots of $\Phi$ with respect to $\Phi$. We always use Bourbaki's numbering of simple roots; see \cite[Planche~VII]{B}.

We choose a Chevalley system $\bigcup_{\alpha\in\Phi_+}\{h_\alpha,e_\alpha,f_\alpha\}$ of $\g_\Z$ so that that the signs of the structure constants $N_{\alpha,\beta}\in\{-1,0,1\}$ with $\alpha,\beta\in\Phi$ follow the conventions of \cite{GS} and \cite{LS}. Recall that $h_\alpha=[e_\alpha,f_\alpha]$ for all $\alpha\in\Phi_+$. We set $e_i:=e_{\alpha_i}$, 
$f_i:=f_{\alpha_i}$ and $h_i:=h_{\alpha_i}$ for all $\alpha_i\in\Pi$ and denote by $(\,\cdot\,,\,\cdot\,)$ the $\Z$-valued invariant symmetric bilinear form on $\g_\Z$ such that $(e_\alpha,f_\alpha)=1$ for all $\alpha\in\Phi_+$. 

Given $x\in \g$ we denote by $\g_x$ the centraliser of $x$ in $\g$. Of course, our main concern is with the nilpotent elements $e\in\g_\Z$ labelled ${\rm A}_5 + {\rm A}_1$  and ${\rm D}_5({\rm a}_1) + {\rm A}_2$. A lot of useful information on the structure of $\g_e$ can be found in \cite[pp.~149, 150]{LT2}. We note that the cocharacter $\tau\in X_*(T_\Z)$ introduced in {\it op.\,cit.} 
is optimal for $e$ in the sense of the Kempf--Rousseau theory; see \cite{Pr03} for detail. The adjoint action of $\tau(\C^*)$ on $\g$ gives rise to a $\Z$-grading $\g_e=\bigoplus_{i\in\Z_{\ge 0}}\g_e(e)$ of $\g_e$. As explained in \cite[3.4]{PT21}, this grading is defined over $R$, that is $\g_{R,e}:=\g_e\cap \g_R=\bigoplus_{i\in\Z_{\ge 0}}\g_{R,e}(i)$ where $\g_{R,e}(i)=\g_R\cap\g_e(i)$. Also, $\g_{R,e}$ is a direct summand of the Lie ring $\g_R$.  

In what follows we adopt the notation introduced in \cite{Pr02} and \cite{PT21}. 
Let $Q$ be the generalised Gelfand--Graev module associated with $e$ and write $Q_R$ and $U(\g_R,e)$ for the $R$-forms of $Q$ and $U(\g,e)$ defined in \cite[4.1, 5.1]{PT21}. We write $\F_i(Q)$ and $\F_i(Q_R)$ for the $i$-th components of the Kazhdan filtration of $Q$ and $Q_R$, respectively, and regard $U(\g,e)$ as a subspace of $Q$. 
By \cite[4.5]{PS18} and \cite[Sect.~5]{PT21}, the associative algebra $U(\g,e)$ is generated by elements $\Theta_y$ with $y\in \bigcup_{i\le 5}\,\g_e(i)$ and every such element is defined over $R$, i.e. has the property  \begin{eqnarray}
	\label{e:PBWKazhdanterm}
	\Theta(y) \,=\, y + \sum_{|(\i, \j)|_e\, \le\, n_k + 2,\ |{\bf i}|+|{\bf j}|\ge 2} \lambda_{\i, \j}(y) x^\i z^\j
\end{eqnarray}
for some $\lambda_{\i, \j}(y)\in R$; see \cite[4.2]{PT21}. The monomials $x^{\i}z^{\j}$ involved in (\ref{e:PBWKazhdanterm}) will be described in more detail in Subsection~\ref{ss3}.

\section{Dealing with the orbit $\mathrm{A_5A_1}$}
\subsection{A relation in $\mathbf{\g_e(6)}$ involving four elements of weight $\mathbf{3}$}
\label{ss1}
Following \cite[p.~149]{LT2} we choose $e=e_1+e_2+e_4+e_5+e_6+e_7$. Then $$f=f_1+5f_2+8f_4+9f_5+8f_6+5f_7$$ and $h=h_1+2h_2-9h_3+2h_4+2h_5+2h_6+2h_7-9h_8.$ The Lie algebra $\g_e(0)$ consists of two commuting $\sl_2$-triples generated by $e_{\tilde{\alpha}}, f_{\tilde{\alpha}}$ and
$e':=e_{{{1\!\!\atop{}}{2\!\!\atop{}}{3\atop2}\!\!{2\!\!\atop{}}{1\!\!\atop{}}{0\!\!\atop{}}{0\!\!\atop{}}}}+e_{{{1\!\!\atop{}}{2\!\!\atop{}}{3\atop1}\!\!{2\!\!\atop{}}{1\!\!\atop{}}{1\!\!\atop{}}{0\!\!\atop{}}}}-e_{{{1\!\!\atop{}}{2\!\!\atop{}}{2\atop1}\!\!{2\!\!\atop{}}{2\!\!\atop{}}{1\!\!\atop{}}{0\!\!\atop{}}}}$, $f':=f_{{{1\!\!\atop{}}{2\!\!\atop{}}{3\atop2}\!\!{2\!\!\atop{}}{1\!\!\atop{}}{0\!\!\atop{}}{0\!\!\atop{}}}}+f_{{{1\!\!\atop{}}{2\!\!\atop{}}{3\atop1}\!\!{2\!\!\atop{}}{1\!\!\atop{}}{1\!\!\atop{}}{0\!\!\atop{}}}}-f_{{{1\!\!\atop{}}{2\!\!\atop{}}{2\atop1}\!\!{2\!\!\atop{}}{2\!\!\atop{}}{1\!\!\atop{}}{0\!\!\atop{}}}}$. The $4$-dimensional graded component $\g_e(3)$ is a direct sum of two $\g_e(0)$-modules of highest weights $(1,0)$ and $(0,1)$. As in {\it loc.\,cit.} we choose 
\begin{eqnarray}
\nonumber
v&:=&e_{{{1\!\!\atop{}}{2\!\!\atop{}}{4\atop2}\!\!{3\!\!\atop{}}{2\!\!\atop{}}{1\!\!\atop{}}{1\!\!\atop{}}}}-e_{{{1\!\!\atop{}}{2\!\!\atop{}}{3\atop2}\!\!{3\!\!\atop{}}{2\!\!\atop{}}{2\!\!\atop{}}{1\!\!\atop{}}}}+e_{{{1\!\!\atop{}}{2\!\!\atop{}}{3\atop1}\!\!{3\!\!\atop{}}{3\!\!\atop{}}{2\!\!\atop{}}{1\!\!\atop{}}}},\\
\nonumber
v'&:=&e_{{{1\!\!\atop{}}{1\!\!\atop{}}{2\atop1}\!\!{1\!\!\atop{}}{1\!\!\atop{}}{0\!\!\atop{}}{0\!\!\atop{}}}}-e_{{{0\!\!\atop{}}{1\!\!\atop{}}{2\atop1}\!\!{2\!\!\atop{}}{1\!\!\atop{}}{0\!\!\atop{}}{0\!\!\atop{}}}}-e_{{{0\!\!\atop{}}{1\!\!\atop{}}{2\atop1}\!\!{1\!\!\atop{}}{1\!\!\atop{}}{1\!\!\atop{}}{0\!\!\atop{}}}}+2e_{{{1\!\!\atop{}}{1\!\!\atop{}}{1\atop1}\!\!{1\!\!\atop{}}{1\!\!\atop{}}{1\!\!\atop{}}{0\!\!\atop{}}}}
\end{eqnarray} as corresponding highest weight vectors. Setting $v:=-[f_{\tilde{\alpha}}, u]$ and
$v':=-[f',u']$ and using the structure constants $N_{\alpha,\beta}$ tabulated in \cite[Appendix]{LS} we then check directly that
\begin{eqnarray}
	\nonumber  
u&:=&f_{{{1\!\!\atop{}}{2\!\!\atop{}}{3\atop2}\!\!{2\!\!\atop{}}{1\!\!\atop{}}{1\!\!\atop{}}{1\!\!\atop{}}}}-f_{{{1\!\!\atop{}}{2\!\!\atop{}}{3\atop1}\!\!{2\!\!\atop{}}{2\!\!\atop{}}{1\!\!\atop{}}{1\!\!\atop{}}}}+f_{{{1\!\!\atop{}}{2\!\!\atop{}}{2\atop1}\!\!{2\!\!\atop{}}{2\!\!\atop{}}{2\!\!\atop{}}{1\!\!\atop{}}}},\\
\nonumber
u'&:=&f_{{{1\!\!\atop{}}{1\!\!\atop{}}{1\atop1}\!\!{0\!\!\atop{}}{0\!\!\atop{}}{0\!\!\atop{}}{0\!\!\atop{}}}}+f_{{{1\!\!\atop{}}{1\!\!\atop{}}{1\atop0}\!\!{1\!\!\atop{}}{0\!\!\atop{}}{0\!\!\atop{}}{0\!\!\atop{}}}}+f_{{{0\!\!\atop{}}{1\!\!\atop{}}{1\atop1}\!\!{1\!\!\atop{}}{0\!\!\atop{}}{0\!\!\atop{}}{0\!\!\atop{}}}}+2f_{{{0\!\!\atop{}}{1\!\!\atop{}}{1\atop0}\!\!{1\!\!\atop{}}{1\!\!\atop{}}{0\!\!\atop{}}{0\!\!\atop{}}}}.
\end{eqnarray}
One has to keep in mind here that $$N_{\scriptstyle{{{{1\!\!\atop{}}{2\!\!\atop{}}{2\atop1}\!\!{2\!\!\atop{}}{2\!\!\atop{}}{2\!\!\atop{}}{1\!\!\atop{}}}}, {{{1\!\!\atop{}}{2\!\!\atop{}}{4\atop2}\!\!{3\!\!\atop{}}{2\!\!\atop{}}{1\!\!\atop{}}{1\!\!\atop{}}}}}}\,=\,
N_{\scriptstyle{{{{1\!\!\atop{}}{2\!\!\atop{}}{3\atop1}\!\!{2\!\!\atop{}}{2\!\!\atop{}}{2\!\!\atop{}}{1\!\!\atop{}}}}, {{{1\!\!\atop{}}{2\!\!\atop{}}{3\atop2}\!\!{3\!\!\atop{}}{2\!\!\atop{}}{1\!\!\atop{}}{1\!\!\atop{}}}}}}\,=\,N_{\scriptstyle{{{{1\!\!\atop{}}{2\!\!\atop{}}{3\atop2}\!\!{2\!\!\atop{}}{1\!\!\atop{}}{1\!\!\atop{}}{1\!\!\atop{}}}}, {{{1\!\!\atop{}}{2\!\!\atop{}}{3\atop1}\!\!{3\!\!\atop{}}{3\!\!\atop{}}{2\!\!\atop{}}{1\!\!\atop{}}}}}}\,=\,1,$$
$$N_{{\scriptstyle{{{{0\!\!\atop{}}{1\!\!\atop{}}{1\atop0}\!\!{1\!\!\atop{}}{1\!\!\atop{}}{0\!\!\atop{}}{0\!\!\atop{}}}}, {{{1\!\!\atop{}}{1\!\!\atop{}}{1\atop1}\!\!{1\!\!\atop{}}{1\!\!\atop{}}{1\!\!\atop{}}{0\!\!\atop{}}}}}}}\,=\,N_{{\scriptstyle{{{{1\!\!\atop{}}{1\!\!\atop{}}{1\atop0}\!\!{1\!\!\atop{}}{0\!\!\atop{}}{0\!\!\atop{}}{0\!\!\atop{}}}}, {{{0\!\!\atop{}}{1\!\!\atop{}}{2\atop1}\!\!{1\!\!\atop{}}{1\!\!\atop{}}{1\!\!\atop{}}{0\!\!\atop{}}}}}}}\,=\,
N_{{\scriptstyle{{{{1\!\!\atop{}}{1\!\!\atop{}}{1\atop1}\!\!{0\!\!\atop{}}{0\!\!\atop{}}{0\!\!\atop{}}{0\!\!\atop{}}}}, {{{0\!\!\atop{}}{1\!\!\atop{}}{2\atop1}\!\!{2\!\!\atop{}}{1\!\!\atop{}}{0\!\!\atop{}}{0\!\!\atop{}}}}}}}\,=\,-N_{{\scriptstyle{{{{0\!\!\atop{}}{1\!\!\atop{}}{1\atop1}\!\!{1\!\!\atop{}}{0\!\!\atop{}}{0\!\!\atop{}}{0\!\!\atop{}}}}, {{{1\!\!\atop{}}{1\!\!\atop{}}{2\atop1}\!\!{1\!\!\atop{}}{1\!\!\atop{}}{0\!\!\atop{}}{0\!\!\atop{}}}}}}}\,=\,1,$$
 and $N_{\alpha,\beta}=-N_{-\alpha,-\beta}$ for all $\alpha,\beta
\in\Phi_+$; see \cite[p.~409]{GS} and \cite[Appendix]{LS}. 
Let $$w:=e_{{{0\!\!\atop{}}{0\!\!\atop{}}{1\atop1}\!\!{1\!\!\atop{}}{0\!\!\atop{}}{0\!\!\atop{}}{0\!\!\atop{}}}}+e_{{{0\!\!\atop{}}{0\!\!\atop{}}{1\atop0}\!\!{1\!\!\atop{}}{1\!\!\atop{}}{0\!\!\atop{}}{0\!\!\atop{}}}}+e_{{{0\!\!\atop{}}{0\!\!\atop{}}{0\atop0}\!\!{1\!\!\atop{}}{1\!\!\atop{}}{1\!\!\atop{}}{0\!\!\atop{}}}}.$$ 
Since both $[u,v]$ and $[u',v']$ lie in $\g_e(6)$ and have weight $(0,0)$ with respect to $\g_e(0)$ it follows from \cite[p.~149]{LT2} that $[u,v]=a w$ and $[u',v']=b w$ for some $a,b\in\C$. Applying $\ad e_4$ to both sides of the equation $[u,v]=a w$ gives $[[e_4,u],v]+[u,[e_4,v]]=a[e_4,w]$ implying that  
$$-[[e_4,f_{{{1\!\!\atop{}}{2\!\!\atop{}}{3\atop1}\!\!{2\!\!\atop{}}{2\!\!\atop{}}{1\!\!\atop{}}{1\!\!\atop{}}}}],v]-
[u,[e_4,e_{{{1\!\!\atop{}}{2\!\!\atop{}}{3\atop2}\!\!{3\!\!\atop{}}{2\!\!\atop{}}{2\!\!\atop{}}{1\!\!\atop{}}}}]]=a [e_4,e_{{{0\!\!\atop{}}{0\!\!\atop{}}{0\atop0}\!\!{1\!\!\atop{}}{1\!\!\atop{}}{1\!\!\atop{}}{0\!\!\atop{}}}}].$$  It follows from \cite[Appendix]{LS} that $[e_4,e_{{{0\!\!\atop{}}{0\!\!\atop{}}{0\atop0}\!\!{1\!\!\atop{}}{1\!\!\atop{}}{1\!\!\atop{}}{0\!\!\atop{}}}}]=
e_{{0\!\!\atop{}}{0\!\!\atop{}}{1\atop0}\!\!{1\!\!\atop{}}{1\!\!\atop{}}{1\!\!\atop{}}{0\!\!\atop{}}}$ and
$[e_4,e_{{{1\!\!\atop{}}{2\!\!\atop{}}{3\atop2}\!\!{3\!\!\atop{}}{2\!\!\atop{}}{2\!\!\atop{}}{1\!\!\atop{}}}}]=
e_{{1\!\!\atop{}}{2\!\!\atop{}}{4\atop2}\!\!{3\!\!\atop{}}{2\!\!\atop{}}{2\!\!\atop{}}{1\!\!\atop{}}}$. Also,
$[e_{{{1\!\!\atop{}}{2\!\!\atop{}}{3\atop1}\!\!{2\!\!\atop{}}{2\!\!\atop{}}{1\!\!\atop{}}{1\!\!\atop{}}}},
	f_4]=
\varepsilon e_{{{1\!\!\atop{}}{2\!\!\atop{}}{2\atop1}\!\!{2\!\!\atop{}}{2\!\!\atop{}}{1\!\!\atop{}}{1\!\!\atop{}}}}$ for some $\varepsilon\in\{\pm 1\}$.
As $N_{\alpha_4,\,-{{1\!\!\atop{}}{2\!\!\atop{}}{3\atop1}\!\!{2\!\!\atop{}}{2\!\!\atop{}}{1\!\!\atop{}}{1\!\!\atop{}}}}=
N_{{1\!\!\atop{}}{2\!\!\atop{}}{3\atop1}\!\!{2\!\!\atop{}}{2\!\!\atop{}}{1\!\!\atop{}}{1\!\!\atop{}},\,-\alpha_4}$ by \cite[p.~409]{GS}, applying $\ad\,e_4$ to both sides of the last equation gives $[e_{{{1\!\!\atop{}}{2\!\!\atop{}}{3\atop1}\!\!{2\!\!\atop{}}{2\!\!\atop{}}{1\!\!\atop{}}{1\!\!\atop{}}}},
h_4]= \varepsilon [e_4,e_{{{1\!\!\atop{}}{2\!\!\atop{}}{2\atop1}\!\!{2\!\!\atop{}}{2\!\!\atop{}}{1\!\!\atop{}}{1\!\!\atop{}}}}]$.
In view of \cite[Appendix]{LS} this yields $-e_{{{1\!\!\atop{}}{2\!\!\atop{}}{3\atop1}\!\!{2\!\!\atop{}}{2\!\!\atop{}}{1\!\!\atop{}}{1\!\!\atop{}}}}=\varepsilon e_{{{1\!\!\atop{}}{2\!\!\atop{}}{3\atop1}\!\!{2\!\!\atop{}}{2\!\!\atop{}}{1\!\!\atop{}}{1\!\!\atop{}}}}$ forcing $\varepsilon=-1$. As a result, $$[f_{{{1\!\!\atop{}}{2\!\!\atop{}}{2\atop1}\!\!{2\!\!\atop{}}{2\!\!\atop{}}{1\!\!\atop{}}{1\!\!\atop{}}}},e_{{{1\!\!\atop{}}{2\!\!\atop{}}{3\atop1}\!\!{3\!\!\atop{}}{3\!\!\atop{}}{2\!\!\atop{}}{1\!\!\atop{}}}}]-
[f_{{{1\!\!\atop{}}{2\!\!\atop{}}{3\atop2}\!\!{2\!\!\atop{}}{1\!\!\atop{}}{1\!\!\atop{}}{1\!\!\atop{}}}},e_{{{1\!\!\atop{}}{2\!\!\atop{}}{4\atop2}\!\!{3\!\!\atop{}}{2\!\!\atop{}}{2\!\!\atop{}}{1\!\!\atop{}}}}]= a e_{{{0\!\!\atop{}}{0\!\!\atop{}}{1\atop0}\!\!{1\!\!\atop{}}{1\!\!\atop{}}{1\!\!\atop{}}{0\!\!\atop{}}}}. $$ Using \cite[Appendix]{LS} we note that	$[e_{{{0\!\!\atop{}}{0\!\!\atop{}}{1\atop0}\!\!{1\!\!\atop{}}{1\!\!\atop{}}{1\!\!\atop{}}{0\!\!\atop{}}}},e_{{{1\!\!\atop{}}{2\!\!\atop{}}{2\atop1}\!\!{2\!\!\atop{}}{2\!\!\atop{}}{1\!\!\atop{}}{1\!\!\atop{}}}}]=e_{{{1\!\!\atop{}}{2\!\!\atop{}}{3\atop1}\!\!{3\!\!\atop{}}{3\!\!\atop{}}{2\!\!\atop{}}{1\!\!\atop{}}}}$ and $[e_{{{0\!\!\atop{}}{0\!\!\atop{}}{1\atop0}\!\!{1\!\!\atop{}}{1\!\!\atop{}}{1\!\!\atop{}}{0\!\!\atop{}}}},e_{{{1\!\!\atop{}}{2\!\!\atop{}}{3\atop2}\!\!{2\!\!\atop{}}{1\!\!\atop{}}{1\!\!\atop{}}{1\!\!\atop{}}}}]=-e_{{{1\!\!\atop{}}{2\!\!\atop{}}{4\atop2}\!\!{3\!\!\atop{}}{2\!\!\atop{}}{2\!\!\atop{}}{1\!\!\atop{}}}}.$ Therefore,
$$[f_{{{1\!\!\atop{}}{2\!\!\atop{}}{2\atop1}\!\!{2\!\!\atop{}}{2\!\!\atop{}}{1\!\!\atop{}}{1\!\!\atop{}}}},
[e_{{{0\!\!\atop{}}{0\!\!\atop{}}{1\atop0}\!\!{1\!\!\atop{}}{1\!\!\atop{}}{1\!\!\atop{}}{0\!\!\atop{}}}},e_{{{1\!\!\atop{}}{2\!\!\atop{}}{2\atop1}\!\!{2\!\!\atop{}}{2\!\!\atop{}}{1\!\!\atop{}}{1\!\!\atop{}}}}]]+
[f_{{{1\!\!\atop{}}{2\!\!\atop{}}{3\atop2}\!\!{2\!\!\atop{}}{1\!\!\atop{}}{1\!\!\atop{}}{1\!\!\atop{}}}},[e_{{{0\!\!\atop{}}{0\!\!\atop{}}{1\atop0}\!\!{1\!\!\atop{}}{1\!\!\atop{}}{1\!\!\atop{}}{0\!\!\atop{}}}},e_{{{1\!\!\atop{}}{2\!\!\atop{}}{3\atop2}\!\!{2\!\!\atop{}}{1\!\!\atop{}}{1\!\!\atop{}}{1\!\!\atop{}}}}]]= a e_{{{0\!\!\atop{}}{0\!\!\atop{}}{1\atop0}\!\!{1\!\!\atop{}}{1\!\!\atop{}}{1\!\!\atop{}}{0\!\!\atop{}}}}.$$ Equivalently, 
$-[e_{{{0\!\!\atop{}}{0\!\!\atop{}}{1\atop0}\!\!{1\!\!\atop{}}{1\!\!\atop{}}{1\!\!\atop{}}{0\!\!\atop{}}}},h_{{{1\!\!\atop{}}{2\!\!\atop{}}{2\atop1}\!\!{2\!\!\atop{}}{2\!\!\atop{}}{1\!\!\atop{}}{1\!\!\atop{}}}}]-[e_{{{0\!\!\atop{}}{0\!\!\atop{}}{1\atop0}\!\!{1\!\!\atop{}}{1\!\!\atop{}}{1\!\!\atop{}}{0\!\!\atop{}}}},h_{{{1\!\!\atop{}}{2\!\!\atop{}}{3\atop2}\!\!{2\!\!\atop{}}{1\!\!\atop{}}{1\!\!\atop{}}{1\!\!\atop{}}}}]= a e_{{{0\!\!\atop{}}{0\!\!\atop{}}{1\atop0}\!\!{1\!\!\atop{}}{1\!\!\atop{}}{1\!\!\atop{}}{0\!\!\atop{}}}}$. Thus  $a=-2$ so that $$[u,v]=-2w.$$ Since $[e_4,u']=0$, applying $\ad e_4$ to both sides of the equation $[u',v']=bw$ we get $$[u', [e_4,2e_{{{1\!\!\atop{}}{1\!\!\atop{}}{1\atop1}\!\!{1\!\!\atop{}}{1\!\!\atop{}}{1\!\!\atop{}}{0\!\!\atop{}}}}]]=2[u',e_{{{1\!\!\atop{}}{1\!\!\atop{}}{2\atop1}\!\!{1\!\!\atop{}}{1\!\!\atop{}}{1\!\!\atop{}}{0\!\!\atop{}}}}]=
b[e_4,e_{{{0\!\!\atop{}}{0\!\!\atop{}}{0\atop0}\!\!{1\!\!\atop{}}{1\!\!\atop{}}{1\!\!\atop{}}{0\!\!\atop{}}}}]=b
e_{{0\!\!\atop{}}{0\!\!\atop{}}{1\atop0}\!\!{1\!\!\atop{}}{1\!\!\atop{}}{1\!\!\atop{}}{0\!\!\atop{}}}$$ (we use the fact that $N_{\alpha_4,\,{{1\!\!\atop{}}{1\!\!\atop{}}{1\atop1}\!\!{1\!\!\atop{}}{1\!\!\atop{}}{1\!\!\atop{}}{0\!\!\atop{}}}}=1$ which follows from the conventions in \cite{GS}). Our formula for $u'$ implies that $[u',e_{{{1\!\!\atop{}}{1\!\!\atop{}}{2\atop1}\!\!{1\!\!\atop{}}{1\!\!\atop{}}{1\!\!\atop{}}{0\!\!\atop{}}}}]=[f_{{{1\!\!\atop{}}{1\!\!\atop{}}{1\atop1}\!\!{0\!\!\atop{}}{0\!\!\atop{}}{0\!\!\atop{}}{0\!\!\atop{}}}},e_{{{1\!\!\atop{}}{1\!\!\atop{}}{2\atop1}\!\!{1\!\!\atop{}}{1\!\!\atop{}}{1\!\!\atop{}}{0\!\!\atop{}}}}]$. As $[e_{{{0\!\!\atop{}}{0\!\!\atop{}}{1\atop0}\!\!{1\!\!\atop{}}{1\!\!\atop{}}{1\!\!\atop{}}{0\!\!\atop{}}}},e_{{{1\!\!\atop{}}{1\!\!\atop{}}{1\atop1}\!\!{0\!\!\atop{}}{0\!\!\atop{}}{0\!\!\atop{}}{0\!\!\atop{}}}}]=-e_{{{1\!\!\atop{}}{1\!\!\atop{}}{2\atop1}\!\!{1\!\!\atop{}}{1\!\!\atop{}}{1\!\!\atop{}}{0\!\!\atop{}}}}$ by \cite[Appendix]{LS}, we now obtain $$-2[f_{{{1\!\!\atop{}}{1\!\!\atop{}}{1\atop1}\!\!{0\!\!\atop{}}{0\!\!\atop{}}{0\!\!\atop{}}{0\!\!\atop{}}}},[e_{{{0\!\!\atop{}}{0\!\!\atop{}}{1\atop0}\!\!{1\!\!\atop{}}{1\!\!\atop{}}{1\!\!\atop{}}{0\!\!\atop{}}}},e_{{{1\!\!\atop{}}{1\!\!\atop{}}{1\atop1}\!\!{0\!\!\atop{}}{0\!\!\atop{}}{0\!\!\atop{}}{0\!\!\atop{}}}}]]=2[e_{{{0\!\!\atop{}}{0\!\!\atop{}}{1\atop0}\!\!{1\!\!\atop{}}{1\!\!\atop{}}{1\!\!\atop{}}{0\!\!\atop{}}}},h_{{{1\!\!\atop{}}{1\!\!\atop{}}{1\atop1}\!\!{0\!\!\atop{}}{0\!\!\atop{}}{0\!\!\atop{}}{0\!\!\atop{}}}}]=
be_{{0\!\!\atop{}}{0\!\!\atop{}}{1\atop0}\!\!{1\!\!\atop{}}{1\!\!\atop{}}{1\!\!\atop{}}{0\!\!\atop{}}}.$$ Hence $b=2$ so that
$[u',v']=2w$. In view of the above the following relation holds in $\g_e(6)$:
\begin{equation}
	\label{relation}
[u,v]+[u',v']=0.
\end{equation}
\subsection{Searching for a quadratic relation in $\mathbf{U(\g,e)}^{\mathbf{\mathrm ab}}$}
\label{ss2}
Our hope is that despite (\ref{relation}) the element $[\Theta_u,\Theta_v]+[\Theta_{u'},\Theta_{v'}]\in U(\g,e)$ is nonzero and, moreover, lies in 
$\F_8(Q)\setminus \F_7(Q)$. 
Let $\P(\g,e)\,=\big(\gr_{\F}(U(\g,e)), \{\,\cdot\,,\,\cdot\,\}\big)$ denote the Poisson algebra associate with Kazhdan-filtered algebra $U(\g,e)$. It is well-known that $\P(\g,e)$ identifies with the algebra of regular functions on the Slodowy slice $e+\g_f$ to adjoint $G$-orbit $e$; see \cite{Pr02, GG02}. We identify $\P(\g,e)$ with the symmetric algebra $S(\g_e)$ by using the isomorphism between $\g$ and $\g^*$ induced by the $G$-invariant symmetric bilinear form
$(\,\cdot\,,\cdot\,)$ on $\g$. We write $\mathcal{I}$ for the ideal of $\P(\g,e)$ generated by $\bigcup_{i\ne 2}\g_e(i)$ and put $\bar{\P}:=\P(\g,e)/\mathcal{I}$. Obviously, $\bar{\P} \cong S(\g_2(2))$ as $\C$-algebras.

Given $y\in \g_e(i)$ we write $\theta_y$ for the $\F$-symbol of $\Theta_y$ in $\P_{i+2}(\g,e)$.
We put $\varphi:=\{\theta_u,\theta_v\}+\{\theta_{u'},\theta_{v'}\}$, an element of $\P_8$ (possibly zero), and denote by $\bar{\varphi}$ the image of $\varphi$ in $\bar{\P}$.
By \cite[p.~149]{LT2}, the graded component $\g_e(2)=\g_e(2)^{\g_e(0)}$ is spanned by $e$ and $e_1=e_{\alpha_1}$. In view of
(\ref{relation}) and \cite[Theorem~4.6(iv)]{Pr02}  
the linear part of $\varphi$ is zero and  there exist scalars $\lambda,\mu,\nu$ such that
$$\bar{\varphi}=\lambda e^2+\mu ee_1+\nu e_1^2.$$
In fact, the main results of \cite[Theorem~1.2]{PT21} imply that $\lambda,\mu,\nu \in R$.
Since it follows from \cite[Prop.~2.1]{Pr14} and \cite[5.2]{PT21} that the commutative quotient $U(\g,e)^{\rm ab}$ is generated by the image of $\Theta_e$, we wish to take a closer look at the image of
$\{\theta_u,\theta_v\}+\{\theta_{u'},\theta_{v'}\}$ in $\bar{\P}$.

By \cite[p.~149]{LT2}, the graded component $\g_e(1)$ is an irreducible $\g_e(0)$-module generated by
$e_{{{{2\!\!\atop{}}{3\!\!\atop{}}{4\atop2}\!\!{3\!\!\atop{}}{2\!\!\atop{}}{1\!\!\atop{}}{0\!\!\atop{}}}}}$, a highest weight vector of weight $(0,3)$ for $\g_e(0)$. Hence $[\g_e(1),\g_e(1)]\subseteq \g_e(2)=\g_e(2)^{\g_e(0)}$ has dimension $\le 1$. On the other hand, a rough calculation relying on the above expression of $f'$ shows that $(\ad f')^3(e_{{{{2\!\!\atop{}}{3\!\!\atop{}}{4\atop2}\!\!{3\!\!\atop{}}{2\!\!\atop{}}{1\!\!\atop{}}{0\!\!\atop{}}}}})\in Re_1$. Since in the present case $\g_e=\C e \oplus[\g_e,\g_e]$, we see that $$[\g_e,\g_e](2)\,=\,[\g_e(1),\g_e(1)]+[\g_e(0),\g_e(2)^{\g_e(0)}]\,=\,[\g_e(1),\g_e(1)]^{\g_e(0)}$$
has codimension $1$ in $\g_e(2)$. The preceding remark now entails that $e_1\in [\g_e(1),\g_e(1)]$. 
 
Since it is immediate from \cite[Prop.~2.1]{Pr14} and \cite[5.2]{PT21} that the largest commutative quotient of $U(\g,e)$ is generated by the image of $\Theta_e$ we would find a desired quadratic relation in $U(\g,e)^{\rm ab}$  if we managed to prove that the coefficient $\lambda$
of $\bar{\varphi}$ is nonzero. Indeed, let $I_c$ denote the $2$-sided ideal of $U(\g,e)$ generated by all commutators. If it happens that $\lambda\in R^\times$ then the element  $[\Theta_u,\Theta_v]+[\Theta_{u'},\Theta_{v'}]\in I_c\cap Q_R$ has Kazhdan degree $8$ and is congruent to $\lambda \Theta_e^2$ modulo $I_c\cap U(\g_R,e)+ \F_7(Q_R)$. As \cite[Prop.~5.4]{PT21} yields $$U(\g,e)\cap \F_7(Q_R)\subset R 1+R\Theta_e+I_c\cap U(\g_R,e)$$ the latter would imply that 
$\lambda\Theta_e^2+\eta\Theta_e+\xi 1\in I_c$ for some $\lambda\in R^\times$ and $\eta,\xi\in R$.

From the expression for $f$ in Subsection~\ref{ss1} we get $(e,f)=5+8+9+8+5+1=36$. As $(e,f_1)=(e_1,f_1)=1$ we obtain $(e_1,f-f_1)=0$ and $(e,f-f_0)=35$.  Since all elements of $\I$ vanish on $f-f_1$ this gives
\begin{equation}
	\label{varphi}
	\varphi(f-f_1)=\lambda(e,f-f_1)^2=5^27^2\lambda.\end{equation}  This formula indicates that we might expect some complications in characteristic $7$. 
\subsection{Computing $\mathbf{\lambda}$, part~1}
\label{ss3} 
In order to determine $\lambda$ we need a more explicit formula for commutators
$[\Theta_a,\Theta_b]$ with $a,b\in \g_e(3)$. For that purpose, it is more convenient to use the construction of $U(\g,e)$ introduced by Gan--Ginzburg in \cite{GG02}.  Let 
$\chi\in \g^*$ be such that $\chi(x)=(e,x)$ for all $x\in\g$ and set $\n':=\bigoplus_{i\le -2}\g(i)$ and $\n:=\bigoplus_{i\le 1}\g(i)$. Let $\mathcal{J}_\chi$ denote the left ideal of $U(\g)$ generated by all $x-\chi(x)$ with $x\in\n'$ and put
$\widehat{Q}:=U(\g)/\mathcal{J}_\chi$.
Since $\chi$ vanishes on $[\n,\n']\subseteq \bigoplus_{i\le -3}\g(i)$, the left ideal $\mathcal{J}_\chi$ is stable under the adjoint action of $\n$. Therefore, $\n$ acts on $\widehat{Q}$. Moreover, the fixed point space $\widehat{Q}^{\ad\n}$ carries a natural algebra structure given by $(x+{\mathcal J}_\chi)(y+\mathcal{J}_\chi)=xy+\mathcal{J}_\chi$ for all $x+\mathcal{J}_\chi,y+\mathcal{J}_\chi\in \widehat{Q}_\chi$. By \cite[Theorem~4.1]{GG02}, $U(\g,e)\cong \widehat{Q}^{\ad\n}$ as algebras. The Kazhdan filtration $\F$ of $\widehat{Q}$  (induced by that of $U(\g)$) is nonnegative.

Let $\langle\,\cdot\,,\,\cdot\,\rangle$ be the non-degenerate symplectic form on $\g(-1)$ given by $\langle x,y\rangle=(e,[x,y])$ for all $x,y\in \g(-1)$ and let $z_1,\cdots,z_s,z_{s+1},\cdots, z_{2s}$ be  a basis of $\g(-1)$ such that  $\langle z_{i+s}, z_j\rangle=\delta_{ij}$ and
$\langle z_i, z_j\rangle =\langle z_{i+s}, z_{j+s}\rangle=0$ for all $1\le i,j\le s$. Let $\p=\bigoplus_{i\ge 0}\g(i)$, the parabolic subalgebra associated with the cocharacter $\tau$, and let $x_1,\ldots, x_m$ be a homogeneous basis of $\p$ such that $x_1,\ldots, x_r$ is a basis of $\g_e\subset \p$ and $x_i\in \g(n_i)$ for some $n_i\in\Z_{\ge 0}$ (and all $i\le m$). Given $(\i,\j)\in\Z_{\ge 0}^m\times Z_{\ge 0}^{2s}$ we set $x^{\i}z^{\j}:=x_1^{i_1}\cdots x_m^{i_m}z_1^{j_1}\cdots
z_{2s}^{j_{2s}}$. Clearly, $\F_d(\widehat{Q})\subset U(\g)/\mathcal{J}_\chi$ has $\C$-basis
consisting of all $x^{\i}z^{\j}$ 
with $$|(\i,\j)|_e:=\textstyle{\sum}_{k=1}^m\, i_k(n_k+2)+\textstyle{\sum}_{k=1}^{2s}\, j_k={\rm wt}_h(x^{\i}z^{\j})+2\deg(x^{\i}z^{\j})\le d.$$

As explained in \cite[2.1]{Pr07a} the algebra $U(\g,e)$ has a PBW basis consisting of monomials
$\Theta^{\i}:=\Theta_1^{i_1}\cdots \Theta_r^{x_r}$ with $\i\in \Z_{\ge 0}^r$, where  
$$\Theta_k \,=\, x_k + \sum_{|(\i, \j)|_e\, \le\, n_k + 2,\ |{\bf i}|+|{\bf j}|\ge 2} \lambda_{\i, \j}^k x^\i z^\j,\qquad 1\le k\le r,$$ where $\lambda_{\i,\j}^k\in \C$ and $\lambda_{\i,\j}^k=0$ whenever $\j=\mathbf{0}$ and
$i_j=0$ for $j>r$. The elements $\{\Theta_k\,|\,\,1\le k\le r\}$ are unique by \cite[Lemma~2.4]{PT21}. 

Given $a=\sum_i \xi_i x_i\in \g_e$ we put $\Theta_a:=\sum_i \xi_i\Theta_i$.
Following \cite[2.4]{Pr07a} we denote by $\mathbf{A}_e$ the associative $\C$-algebra generated by $z_1,\ldots, z_s,z_{s+1},\ldots, z_{2s}$ subject to the relations $[z_{i+s},z_j]=\delta_{ij}$ and $[z_i,z_j]=[z_{i+s},z_{j+s}]=0$ for all $1\le i,j\le s$. Clearly, $\mathbf{A}_e\cong \mathbf{A}_s(\C)$, the $s$-th Weyl algebra over $\C$.
Let $i\mapsto i^*$ denote the involution of the index set $\{1,\ldots, s,s+1,\ldots, 2s\}$ such that $i^*=i+s$ for $i\le s$ and $i^*=i-s$ for $i>s$, and put $z_i^*:=(-1)^{p(i)}z_{i^*}$ where $p(i)=0$ if $i\le s$ and $p(i)=1$ if $i>s$. Then  $[z_{i}^*,z_j]\in \delta_{ij} + \mathcal{J}_\chi$ for all $i\le 2s$. 

Let $a\in \g_e(d)$ where $d\ge 1$. As $\g(-1)\subset\n$ and $\Theta_a\in \widehat{Q}^{\n}$ it is straightforward to see that 
\begin{equation*}
\Theta_a \,\equiv\, a + \sum_{i=1}^{2s}[a,z_i^*]z_i+\sum_{|(\i, {\bf 0})|_e\, =\, d + 2,\ |{\bf i}|=2} \lambda_{\i, {\bf 0}}(a) x^\i+
\sum_{|(\i, \j)|_e\, =\, d + 2,\ |{\bf i}|+|{\bf j}|\ge 3} \lambda_{\i, \j}(a) x^\i z^\j\ \ \ \ {\rm mod}\ \F_{d+1}(\widehat{Q})\end{equation*} where $\lambda_{\i.\j}(a)\in \C.$
By \cite[Prop.~2.2]{Pr07a}, there exists an injective homomorphism of $\C$-algebras   
$\widetilde{\mu}\colon\, U(\g,e)\hookrightarrow U(\p)\otimes \mathbf{A}_e^{\rm op}$ such that
$$\tilde{\mu}(\Theta_k)=x_k\otimes 1+ \sum_{|(\i, \j)|_e\, \le\, n_k + 2,\ |{\bf i}|+|{\bf j}|\ge 2} \lambda_{\i, \j}^k x^\i\otimes z^\j,\qquad 1\le k\le r.$$  If $u_1,u_2\in U(\p)$ and 
$c_1,c_2\in\mathbf{A}_e^{\rm op}$ then $$[u_1\otimes c_1,u_2\otimes c_2]=u_1u_2\otimes c_2c_1-
u_2u_1\otimes c_1c_2=u_1u_2\otimes [c_1,c_2]+[u_1,u_2]\otimes c_1c_2.$$ 

Now let $a\in\g_e(d_1)$ and 
$b\in \g_e(d_2)$, where $d_1,d_2$ are positive integers. Combining the above expressions for 
$\Theta_a$ and $\Theta_b$ with the preceding remark and properties of $\tilde{\mu}$ one observes that 
\begin{eqnarray*}
[\Theta_a,\Theta_b]&\equiv& [a,b]+\sum_{i=1}^{2s}[[a,b]z_i^*]z_i+\sum_{i=1}^{2s}[a,z_i^*][b,z_i]\\
&+&q(a,b)+\sum_{|(\i, \j)|_e\, =\, d_1+d_2 + 2,\ |{\bf i}|+|{\bf j}|\ge 3}\lambda_{\i, \j}(a,b) x^\i z^\j\ \ \ \ {\rm mod}\ \F_{d_1+d_2+1}(\widehat{Q}),
\end{eqnarray*} where  $\lambda_{\i,\j}(a,b)\in\C$ and $q(a,b)$ is a linear combination of $[a,x_i]x_j$ with $n_i+n_j=d_2+2$ and $[b,x_i]x_j$ with $n_i+n_j=d_1+2$. In view of (\ref{relation}) this implies that
$$\{\theta_u,\theta_v\}+\{\theta_{u'},\theta_{v'}\}\,=\,\sum_{i=1}^{2s}\big([u,z_i^*][v,z_i]
+[u',z_i^*][v',z_i]\big)+q(u,v,u',v')+\mbox{terms of standard degree} \ge 3,$$
where $q(u,v,u',v')=q(u,v)+q(u',v')$. 
All terms of standard degree $\ge 3$ involved in $\{\theta_u,\theta_v\}+\{\theta_{u'},\theta_{v'}\}$ have Kazhdan degree $8$.  Therefore, they must vanish at $f-f_1\in \g(-2)$. Since each quadratic monomial involved in $q(u,v,u',v')$ has a linear factor of standard degree $\ge 3$ we also have that $q(u,v,u',v')(f-f_1)=0$. As a consequence, 
\begin{equation*}
\big(\{\theta_u,\theta_v\}+\{\theta_{u'},\theta_{v'}\}\big)(f-f_1)\,=\,
\sum_{i=1}^{2s}([u,z_i^*],f-f_1)([v,z_i], f-f_1)+\sum_{i=1}^{2s}([u',z_i^*],f-f_1)([v',z_i], f-f_1).
\end{equation*}
\subsection{Computing $\mathbf{\lambda}$, part~2}
\label{ss4} Our deliberation in Subsection~\ref{ss3} show that in order to determine $\lambda$ we need to evaluate two sums:
$$A\,:=\,\sum_{i=1}^{2s}([u,z_i^*],f-f_1)([v,z_i], f-f_1)\ \,\mbox{ and }\ \, 
B\,:=\,\sum_{i=1}^{2s}([u',z_i^*],f-f_1)([v',z_i], f-f_1).$$ To simplify notation we put $E:=\ad e$,
$H:=\ad H$, $F=\ad f$ and $H_1:=\ad h_1=\ad h_{\alpha_1}$. Since $u,u',v,v'\in\g_e(3)$ there exist
$u_-\in \C F^3(u)$, $v_-\in \C F^3(v)$, $u_-'\in\C F^3(u')$ and $v_-'\in \C F^3(v')$ such that $u=E^3(u_-)$, $v=E^3(v_-)$, $u'=E^3(u'_-)$ and $v'=E^3(v_-')$. As $\g_e\subset \p$ the $\sl_2$-theory shows that the elements $u_-,v_-,u_-',v_-'$ lie in $\g_f(-3)$.  Using the $\g$-invariance of $(\,\cdot\,,\,\cdot\,)$ and the fact that $E^3(f-f_1)=0$ we get
\begin{eqnarray*}
&A&=\,\sum_{i=1}^{2s}([E^3(u_-),z_i^*],f-f_1)([E^3(v_-),z_i],f-f_1)\\
&=&\sum_{i=1}^{2s}(z_i^*,[E^3(u_-),f-f_1])([z_i,E^3(v_-)f-f_1])\\
&=&\!\!\sum_{i=1}^{2s}(z_i^*,E^3([u_-,f-f_1])-3E([E(u_-),h-h_1]))(z_i,E^3([v_-,f-f_1])-
3E([E(v_-),h-h_1]))\\
&=&\sum_{i=1}^{2s}(e,[E^2([u_-,f-f_1])-3[E(u_-),h-h_1],z_i^*])(e,[E^2([v_-,f-f_1])-
3[E(v_-),h-h_1],z_i]).	
\end{eqnarray*}
Our choice of the $z_i^*$'s implies that $\langle x,y\rangle=\sum_{i=1}^{2s}\langle z_i^*,x\rangle\langle z_i,y\rangle$ for all $x,y\in \g(-1)$.
The definition of $\langle\,\cdot\,,\cdot\,\rangle$ then yields 
\begin{eqnarray*}
A&=&\big(e,\big[[E^2([u_-,f-f_1])-3[E(u_-),h-h_1],[E^2([v_-,f-f_1])-3[E(v_-),h-h_1]\big]\big)\\
&=&\big([E^3(u_-),f-f_1],E^2([v_-,f-f_1]))-3[E(v_-),h-h_1]\big)\\
&=&\big([u,f-f_1],E^2([v_-,f-f_1]\big)-3\big([u,f-f_1],[E(v_-),h-h_1]\big)\\
&=&2([u,e_1],[v_-,f-f_1])-3(u,[f-f_1,[[e,v_-],h-h_1])\,=\,2([[u,e_1],f_1],v_-)-\\
&-&3(u,[[-h+h_1,v_-],h-h_1])+3(u,[[e,[f_1,v_-]],h-h_1])-6(u,[[e,v_-],f-f_1])\\
&=&2([[u,e_1],f_1],v_-)-3\big(u,(H-H_1)^2(v_-)\big)+3([e,[u,[f_1,v_-]],h-h_1)
-6([e,[u,v_-]],f-f_1)\\
&=&2([[u,e_1],f_1],v_-)-3\big(u,(H-H_1)^2(v_-)\big)-6([f_1,v_-],[u,e-e_1])+6([u,v_-],h-h_1)\\
&=&2([[u,e_1],f_1],v_-)-3\big(u,(H-H_1)^2(v_-)\big)+6(u,[e_1,[f_1,v_-]])-6(u,[h-h_1,v_-])\\
&=&8([[u,e_1],f_1],v_-)-3\big((H-H_1)(H-H_1-2)(u),v_-\big).
\end{eqnarray*}	
Absolutely similarly we obtain that
$$B\,=\,8([[u',e_1],f_1],v_-')-3\big((H-H_1)(H-H_1-2)(u'),v_-'\big).$$
The expression for $v$ in Subsection~\ref{ss1} yields $[e_1,u]=[h_1,u]=0$. Since $[h,u]=3u$ this implies that $A=-9(u,v_-)$. Also, $u'=u_1'+u_2'$ where
$$u_1'=f_{{{1\!\!\atop{}}{1\!\!\atop{}}{1\atop1}\!\!{0\!\!\atop{}}{0\!\!\atop{}}{0\!\!\atop{}}{0\!\!\atop{}}}}+f_{{{1\!\!\atop{}}{1\!\!\atop{}}{1\atop0}\!\!{1\!\!\atop{}}{0\!\!\atop{}}{0\!\!\atop{}}{0\!\!\atop{}}}}\ \ \mbox{and}\ \ 
u_2'=f_{{{0\!\!\atop{}}{1\!\!\atop{}}{1\atop1}\!\!{1\!\!\atop{}}{0\!\!\atop{}}{0\!\!\atop{}}{0\!\!\atop{}}}}+2f_{{{0\!\!\atop{}}{1\!\!\atop{}}{1\atop0}\!\!{1\!\!\atop{}}{1\!\!\atop{}}{0\!\!\atop{}}{0\!\!\atop{}}}}.$$ As $[e_1,u_2']=[f_1,u_1']=0$ and $[h_1,u_1']=-u_1'$ we have $[[u',e_1],f_1]=[[u_1',e_1],f_1]=[u_1',h_1]=u_1'$. As $[h_1,u_2']=u_2'$ and $u_1',u_2'\in \g(3)$ we have $$(H-H_1)(H-H_1-2)(u')=(H_1-3)(H_1-1)(u')=-2(H_1-3)(u_1')=8u_1'.$$
From this it is immediate that $B=8(u_1',v_-')-24(u_1',v_-')=-16(u_1',v_-')$ and 
$$A+B=-9(u,v_-)-16(u_1',v_-').$$
Recall that $v=E^3(v_-)$ and $v'=E^3(v_-')$. Since both $v$ and $v'$ have weight $3$ it is straightforward to check that $v_-=\frac{1}{36}F^3(v)$ and $v_-'-=\frac{1}{36}F^3(v')$. As a result,
$$36(A+B)\,=\,9\big((F^3(u),v\big)+16\big((F^3(u_1'),v'\big).$$

Our next step is to compute $F^3(u_1)=(\ad f)^3(f_{{{1\!\!\atop{}}{1\!\!\atop{}}{1\atop1}\!\!{0\!\!\atop{}}{0\!\!\atop{}}{0\!\!\atop{}}{0\!\!\atop{}}}}+f_{{{1\!\!\atop{}}{1\!\!\atop{}}{1\atop0}\!\!{1\!\!\atop{}}{0\!\!\atop{}}{0\!\!\atop{}}{0\!\!\atop{}}}})$. The formula for $f$ in Subsection~\ref{ss1} shows that
$$[f,u_1']=9[f_5,f_{{{1\!\!\atop{}}{1\!\!\atop{}}{1\atop1}\!\!{0\!\!\atop{}}{0\!\!\atop{}}{0\!\!\atop{}}{0\!\!\atop{}}}}]+
5[f_2,f_{{{1\!\!\atop{}}{1\!\!\atop{}}{1\atop0}\!\!{1\!\!\atop{}}{0\!\!\atop{}}{0\!\!\atop{}}{0\!\!\atop{}}}}]+8[f_6,f_{{{1\!\!\atop{}}{1\!\!\atop{}}{1\atop0}\!\!{1\!\!\atop{}}{0\!\!\atop{}}{0\!\!\atop{}}{0\!\!\atop{}}}}].$$ Using the structure constants
and conventions of \cite{GS} we get
$[f,u'_1]=4(f_{{{1\!\!\atop{}}{1\!\!\atop{}}{1\atop1}\!\!{1\!\!\atop{}}{0\!\!\atop{}}{0\!\!\atop{}}{0\!\!\atop{}}}}+
2f_{{{1\!\!\atop{}}{1\!\!\atop{}}{1\atop0}\!\!{1\!\!\atop{}}{1\!\!\atop{}}{0\!\!\atop{}}{0\!\!\atop{}}}})$.
Then
 \begin{eqnarray*}[f,[f,u'_1]]&=&
 	4\big(10[f_1,f_{{{1\!\!\atop{}}{1\!\!\atop{}}{1\atop0}\!\!{1\!\!\atop{}}{1\!\!\atop{}}{0\!\!\atop{}}{0\!\!\atop{}}}}]
 	+8[f_4,f_{{{1\!\!\atop{}}{1\!\!\atop{}}{1\atop1}\!\!{1\!\!\atop{}}{0\!\!\atop{}}{0\!\!\atop{}}{0\!\!\atop{}}}}]
+8[f_6,f_{{{1\!\!\atop{}}{1\!\!\atop{}}{1\atop1}\!\!{1\!\!\atop{}}{0\!\!\atop{}}{0\!\!\atop{}}{0\!\!\atop{}}}}]
+8[f_7,f_{{{1\!\!\atop{}}{1\!\!\atop{}}{1\atop0}\!\!{1\!\!\atop{}}{1\!\!\atop{}}{0\!\!\atop{}}{0\!\!\atop{}}}}]\big)\\
&=&4(-10f_{{{1\!\!\atop{}}{1\!\!\atop{}}{1\atop1}\!\!{1\!\!\atop{}}{1\!\!\atop{}}{0\!\!\atop{}}{0\!\!\atop{}}}}
-8f_{{{1\!\!\atop{}}{1\!\!\atop{}}{2\atop1}\!\!{1\!\!\atop{}}{0\!\!\atop{}}{0\!\!\atop{}}{0\!\!\atop{}}}}
+8f_{{{1\!\!\atop{}}{1\!\!\atop{}}{1\atop1}\!\!{1\!\!\atop{}}{1\!\!\atop{}}{0\!\!\atop{}}{0\!\!\atop{}}}}
+10f_{{{1\!\!\atop{}}{1\!\!\atop{}}{1\atop0}\!\!{1\!\!\atop{}}{1\!\!\atop{}}{1\!\!\atop{}}{0\!\!\atop{}}}}\big)\\
&=&8(-f_{{{1\!\!\atop{}}{1\!\!\atop{}}{1\atop1}\!\!{1\!\!\atop{}}{1\!\!\atop{}}{0\!\!\atop{}}{0\!\!\atop{}}}}
-4f_{{{1\!\!\atop{}}{1\!\!\atop{}}{2\atop1}\!\!{1\!\!\atop{}}{0\!\!\atop{}}{0\!\!\atop{}}{0\!\!\atop{}}}}
+5f_{{{1\!\!\atop{}}{1\!\!\atop{}}{1\atop0}\!\!{1\!\!\atop{}}{1\!\!\atop{}}{1\!\!\atop{}}{0\!\!\atop{}}}}).
\end{eqnarray*}
Finally,
\begin{eqnarray*}
F^3(u_1')&=&
8\big(-8[f_4,f_{{{1\!\!\atop{}}{1\!\!\atop{}}{1\atop1}\!\!{1\!\!\atop{}}{1\!\!\atop{}}{0\!\!\atop{}}{0\!\!\atop{}}}}]
-5[f_7,f_{{{1\!\!\atop{}}{1\!\!\atop{}}{1\atop1}\!\!{1\!\!\atop{}}{1\!\!\atop{}}{0\!\!\atop{}}{0\!\!\atop{}}}}]
-32[f_6,f_{{{1\!\!\atop{}}{1\!\!\atop{}}{2\atop1}\!\!{1\!\!\atop{}}{0\!\!\atop{}}{0\!\!\atop{}}{0\!\!\atop{}}}}]
+25[f_2,f_{{{1\!\!\atop{}}{1\!\!\atop{}}{1\atop0}\!\!{1\!\!\atop{}}{1\!\!\atop{}}{1\!\!\atop{}}{0\!\!\atop{}}}}]
 \big)\\
 &=&8(8f_{{{1\!\!\atop{}}{1\!\!\atop{}}{2\atop1}\!\!{1\!\!\atop{}}{1\!\!\atop{}}{0\!\!\atop{}}{0\!\!\atop{}}}}
 -5f_{{{1\!\!\atop{}}{1\!\!\atop{}}{1\atop1}\!\!{1\!\!\atop{}}{1\!\!\atop{}}{1\!\!\atop{}}{0\!\!\atop{}}}}
 -32f_{{{1\!\!\atop{}}{1\!\!\atop{}}{2\atop1}\!\!{1\!\!\atop{}}{1\!\!\atop{}}{0\!\!\atop{}}{0\!\!\atop{}}}}
 -25f_{{{1\!\!\atop{}}{1\!\!\atop{}}{1\atop1}\!\!{1\!\!\atop{}}{1\!\!\atop{}}{1\!\!\atop{}}{0\!\!\atop{}}}})\\
 &=&-48(4f_{{{1\!\!\atop{}}{1\!\!\atop{}}{2\atop1}\!\!{1\!\!\atop{}}{1\!\!\atop{}}{0\!\!\atop{}}{0\!\!\atop{}}}}
 +5f_{{{1\!\!\atop{}}{1\!\!\atop{}}{1\atop1}\!\!{1\!\!\atop{}}{1\!\!\atop{}}{1\!\!\atop{}}{0\!\!\atop{}}}}).
 \end{eqnarray*}
 Therefore,$$
 \big(F^3(u_1'),v'\big)=-
 48(4f_{{{1\!\!\atop{}}{1\!\!\atop{}}{2\atop1}\!\!{1\!\!\atop{}}{1\!\!\atop{}}{0\!\!\atop{}}{0\!\!\atop{}}}}
 +5f_{{{1\!\!\atop{}}{1\!\!\atop{}}{1\atop1}\!\!{1\!\!\atop{}}{1\!\!\atop{}}{1\!\!\atop{}}{0\!\!\atop{}}}},
 e_{{{1\!\!\atop{}}{1\!\!\atop{}}{2\atop1}\!\!{1\!\!\atop{}}{1\!\!\atop{}}{0\!\!\atop{}}{0\!\!\atop{}}}}+2e_{{{1\!\!\atop{}}{1\!\!\atop{}}{1\atop1}\!\!{1\!\!\atop{}}{1\!\!\atop{}}{1\!\!\atop{}}{0\!\!\atop{}}}})=-2^5\cdot 3 \cdot 7.$$
 Next we determine $F^3(u)=(\ad f)^3(f_{{{1\!\!\atop{}}{2\!\!\atop{}}{3\atop2}\!\!{2\!\!\atop{}}{1\!\!\atop{}}{1\!\!\atop{}}{1\!\!\atop{}}}}-
 f_{{{1\!\!\atop{}}{2\!\!\atop{}}{3\atop1}\!\!{2\!\!\atop{}}{2\!\!\atop{}}{1\!\!\atop{}}{1\!\!\atop{}}}}+
 f_{{{1\!\!\atop{}}{2\!\!\atop{}}{2\atop1}\!\!{2\!\!\atop{}}{2\!\!\atop{}}{2\!\!\atop{}}{1\!\!\atop{}}}})$. Here we use conventions of \cite{GS} and the structure constants from \cite[Appendix]{LS}. We have
 \begin{eqnarray*}
[f,u]&=&8[f_6,f_{{{1\!\!\atop{}}{2\!\!\atop{}}{3\atop2}\!\!{2\!\!\atop{}}{1\!\!\atop{}}{1\!\!\atop{}}{1\!\!\atop{}}}}]
 -5[f_2,f_{{{1\!\!\atop{}}{2\!\!\atop{}}{3\atop1}\!\!{2\!\!\atop{}}{2\!\!\atop{}}{1\!\!\atop{}}{1\!\!\atop{}}}}]
-5[f_7,f_{{{1\!\!\atop{}}{2\!\!\atop{}}{3\atop1}\!\!{2\!\!\atop{}}{2\!\!\atop{}}{1\!\!\atop{}}{1\!\!\atop{}}}}]
-9[f_5,f_{{{1\!\!\atop{}}{2\!\!\atop{}}{3\atop1}\!\!{2\!\!\atop{}}{2\!\!\atop{}}{1\!\!\atop{}}{1\!\!\atop{}}}}]
+8[f_4, f_{{{1\!\!\atop{}}{2\!\!\atop{}}{2\atop1}\!\!{2\!\!\atop{}}{2\!\!\atop{}}{2\!\!\atop{}}{1\!\!\atop{}}}}]\\
&=&-8f_{{{1\!\!\atop{}}{2\!\!\atop{}}{3\atop2}\!\!{2\!\!\atop{}}{2\!\!\atop{}}{1\!\!\atop{}}{1\!\!\atop{}}}}
+5f_{{{1\!\!\atop{}}{2\!\!\atop{}}{3\atop2}\!\!{2\!\!\atop{}}{2\!\!\atop{}}{1\!\!\atop{}}{1\!\!\atop{}}}}
+5f_{{{1\!\!\atop{}}{2\!\!\atop{}}{3\atop1}\!\!{2\!\!\atop{}}{2\!\!\atop{}}{2\!\!\atop{}}{1\!\!\atop{}}}}
+9f_{{{1\!\!\atop{}}{2\!\!\atop{}}{3\atop1}\!\!{3\!\!\atop{}}{2\!\!\atop{}}{1\!\!\atop{}}{1\!\!\atop{}}}}
-8f_{{{1\!\!\atop{}}{2\!\!\atop{}}{3\atop1}\!\!{2\!\!\atop{}}{2\!\!\atop{}}{2\!\!\atop{}}{1\!\!\atop{}}}}\\
&=&-3(f_{{{1\!\!\atop{}}{2\!\!\atop{}}{3\atop2}\!\!{2\!\!\atop{}}{2\!\!\atop{}}{1\!\!\atop{}}{1\!\!\atop{}}}}
-3f_{{{1\!\!\atop{}}{2\!\!\atop{}}{3\atop1}\!\!{3\!\!\atop{}}{2\!\!\atop{}}{1\!\!\atop{}}{1\!\!\atop{}}}}
+f_{{{1\!\!\atop{}}{2\!\!\atop{}}{3\atop1}\!\!{2\!\!\atop{}}{2\!\!\atop{}}{2\!\!\atop{}}{1\!\!\atop{}}}}).
\end{eqnarray*}
Then
\begin{eqnarray*}[f,[f,u]]&=&
-3\big(9[f_5,f_{{{1\!\!\atop{}}{2\!\!\atop{}}{3\atop2}\!\!{2\!\!\atop{}}{2\!\!\atop{}}{1\!\!\atop{}}{1\!\!\atop{}}}}]
+5[f_7,f_{{{1\!\!\atop{}}{2\!\!\atop{}}{3\atop2}\!\!{2\!\!\atop{}}{2\!\!\atop{}}{1\!\!\atop{}}{1\!\!\atop{}}}}]
-15[f_2,f_{{{1\!\!\atop{}}{2\!\!\atop{}}{3\atop1}\!\!{3\!\!\atop{}}{2\!\!\atop{}}{1\!\!\atop{}}{1\!\!\atop{}}}}]
-15[f_7,f_{{{1\!\!\atop{}}{2\!\!\atop{}}{3\atop1}\!\!{3\!\!\atop{}}{2\!\!\atop{}}{1\!\!\atop{}}{1\!\!\atop{}}}}]+\\
&+&5[f_2,f_{{{1\!\!\atop{}}{2\!\!\atop{}}{3\atop1}\!\!{2\!\!\atop{}}{2\!\!\atop{}}{2\!\!\atop{}}{1\!\!\atop{}}}}]
+9[f_5,f_{{{1\!\!\atop{}}{2\!\!\atop{}}{3\atop1}\!\!{2\!\!\atop{}}{2\!\!\atop{}}{2\!\!\atop{}}{1\!\!\atop{}}}}]\big)\,=\,
-3(-9f_{{{1\!\!\atop{}}{2\!\!\atop{}}{3\atop2}\!\!{3\!\!\atop{}}{2\!\!\atop{}}{1\!\!\atop{}}{1\!\!\atop{}}}} 
-5f_{{{1\!\!\atop{}}{2\!\!\atop{}}{3\atop2}\!\!{2\!\!\atop{}}{2\!\!\atop{}}{2\!\!\atop{}}{1\!\!\atop{}}}}-\\
&-&5f_{{{1\!\!\atop{}}{2\!\!\atop{}}{3\atop2}\!\!{2\!\!\atop{}}{2\!\!\atop{}}{2\!\!\atop{}}{1\!\!\atop{}}}}
-9f_{{{1\!\!\atop{}}{2\!\!\atop{}}{3\atop1}\!\!{3\!\!\atop{}}{2\!\!\atop{}}{2\!\!\atop{}}{1\!\!\atop{}}}}
+15f_{{{1\!\!\atop{}}{2\!\!\atop{}}{3\atop2}\!\!{3\!\!\atop{}}{2\!\!\atop{}}{1\!\!\atop{}}{1\!\!\atop{}}}}
+15f_{{{1\!\!\atop{}}{2\!\!\atop{}}{3\atop1}\!\!{3\!\!\atop{}}{2\!\!\atop{}}{2\!\!\atop{}}{1\!\!\atop{}}}}
)\\
&=&6(5f_{{{1\!\!\atop{}}{2\!\!\atop{}}{3\atop2}\!\!{2\!\!\atop{}}{2\!\!\atop{}}{2\!\!\atop{}}{1\!\!\atop{}}}}
-3f_{{{1\!\!\atop{}}{2\!\!\atop{}}{3\atop1}\!\!{3\!\!\atop{}}{2\!\!\atop{}}{2\!\!\atop{}}{1\!\!\atop{}}}}
-3f_{{{1\!\!\atop{}}{2\!\!\atop{}}{3\atop2}\!\!{3\!\!\atop{}}{2\!\!\atop{}}{1\!\!\atop{}}{1\!\!\atop{}}}}).	
\end{eqnarray*}
Finally,
\begin{eqnarray*}
F^3(u)&=&6\big(45
[f_5,f_{{{1\!\!\atop{}}{2\!\!\atop{}}{3\atop2}\!\!{2\!\!\atop{}}{2\!\!\atop{}}{2\!\!\atop{}}{1\!\!\atop{}}}}]
-15[f_2,f_{{{1\!\!\atop{}}{2\!\!\atop{}}{3\atop1}\!\!{3\!\!\atop{}}{2\!\!\atop{}}{2\!\!\atop{}}{1\!\!\atop{}}}}]-
24[f_6,f_{{{1\!\!\atop{}}{2\!\!\atop{}}{3\atop1}\!\!{3\!\!\atop{}}{2\!\!\atop{}}{2\!\!\atop{}}{1\!\!\atop{}}}}]
-24[f_4,f_{{{1\!\!\atop{}}{2\!\!\atop{}}{3\atop2}\!\!{3\!\!\atop{}}{2\!\!\atop{}}{1\!\!\atop{}}{1\!\!\atop{}}}}]-\\
&-&15[f_7,f_{{{1\!\!\atop{}}{2\!\!\atop{}}{3\atop2}\!\!{3\!\!\atop{}}{2\!\!\atop{}}{1\!\!\atop{}}{1\!\!\atop{}}}}]
\big)\,=\,6(-45f_{{{1\!\!\atop{}}{2\!\!\atop{}}{3\atop2}\!\!{3\!\!\atop{}}{2\!\!\atop{}}{2\!\!\atop{}}{1\!\!\atop{}}}}
+15f_{{{1\!\!\atop{}}{2\!\!\atop{}}{3\atop2}\!\!{3\!\!\atop{}}{2\!\!\atop{}}{2\!\!\atop{}}{1\!\!\atop{}}}}
+24f_{{{1\!\!\atop{}}{2\!\!\atop{}}{3\atop1}\!\!{3\!\!\atop{}}{3\!\!\atop{}}{2\!\!\atop{}}{1\!\!\atop{}}}}
+24f_{{{1\!\!\atop{}}{2\!\!\atop{}}{4\atop2}\!\!{3\!\!\atop{}}{2\!\!\atop{}}{1\!\!\atop{}}{1\!\!\atop{}}}}+\\
&+&15f_{{{1\!\!\atop{}}{2\!\!\atop{}}{3\atop2}\!\!{3\!\!\atop{}}{2\!\!\atop{}}{2\!\!\atop{}}{1\!\!\atop{}}}})\,=\,
18(-5f_{{{1\!\!\atop{}}{2\!\!\atop{}}{3\atop2}\!\!{3\!\!\atop{}}{2\!\!\atop{}}{2\!\!\atop{}}{1\!\!\atop{}}}}
+8f_{{{1\!\!\atop{}}{2\!\!\atop{}}{3\atop1}\!\!{3\!\!\atop{}}{3\!\!\atop{}}{2\!\!\atop{}}{1\!\!\atop{}}}}
+8f_{{{1\!\!\atop{}}{2\!\!\atop{}}{4\atop2}\!\!{3\!\!\atop{}}{2\!\!\atop{}}{1\!\!\atop{}}{1\!\!\atop{}}}}).		
\end{eqnarray*}
Therefore,
\begin{eqnarray*}
\big(F^3(u),v\big)&=&18((-5f_{{{1\!\!\atop{}}{2\!\!\atop{}}{3\atop2}\!\!{3\!\!\atop{}}{2\!\!\atop{}}{2\!\!\atop{}}{1\!\!\atop{}}}}
+8f_{{{1\!\!\atop{}}{2\!\!\atop{}}{3\atop1}\!\!{3\!\!\atop{}}{3\!\!\atop{}}{2\!\!\atop{}}{1\!\!\atop{}}}}
+8f_{{{1\!\!\atop{}}{2\!\!\atop{}}{4\atop2}\!\!{3\!\!\atop{}}{2\!\!\atop{}}{1\!\!\atop{}}{1\!\!\atop{}}}},e_{{{1\!\!\atop{}}{2\!\!\atop{}}{4\atop2}\!\!{3\!\!\atop{}}{2\!\!\atop{}}{1\!\!\atop{}}{1\!\!\atop{}}}}-e_{{{1\!\!\atop{}}{2\!\!\atop{}}{3\atop2}\!\!{3\!\!\atop{}}{2\!\!\atop{}}{2\!\!\atop{}}{1\!\!\atop{}}}}+e_{{{1\!\!\atop{}}{2\!\!\atop{}}{3\atop1}\!\!{3\!\!\atop{}}{3\!\!\atop{}}{2\!\!\atop{}}{1\!\!\atop{}}}})\\
&=&18(5+8+8)=2\cdot 3^3\cdot7.
\end{eqnarray*}
As a result,
$36(A+B)=-16\cdot 2^5\cdot 3\cdot 7+9\cdot 2\cdot 3^3\cdot 7=6\cdot 7\cdot (9^2-16^2)=-6\cdot5^2\cdot 7^2.$ In view of (\ref{varphi}) we now deduce that $5^2\cdot 7^2\lambda=
A+B=-\frac{1}{6}\cdot 5^2\cdot 7^2$ forcing $\lambda=-\frac{1}{6}$. This enables us to conclude that in the present case $\dim U(\g,e)^{\rm ab}=2$. It is quite remarkable that $7^2$ gets canceled and we obtain $\lambda\in R^\times$ at the end!
 \begin{Remark}
	For safety, we have used GAP \cite{GAP} to double-check our computations and obtained the same result; i.e. $36(A+B)=-6\cdot5^2\cdot 7^2$.\footnote{The relevant code is available at \url{https://github.com/davistem/the_number_of_multiplicity-free_primitive_ideals/} }
\end{Remark}

\section{Dealing with the orbit $\mathrm{D_5(a_1)A_2}$} 
\subsection{A relation in $\mathbf{\g_e(6)}$ involving two elements of weight $\mathbf{3}$}
\label{ss5}
Following \cite[p.~150]{LT2} we choose $e=e_1+e_2+e_3+e_5+e_7+e_8+e_{\alpha_2+\alpha_4}+e_{\alpha_4+\alpha_5}$ where 
 Hence $e_{\alpha_2+\alpha_4}=[e_2,e_4]$ and $e_{\alpha_4+\alpha_5}=[e_4,e_5]$.
Then $h=6h_1+7h_2+10h_3+12h_4+7h_5+2h_7+2h_8.$
As $f_{\alpha_2+\alpha_4}=-[f_2,f_4]$ and $f_{\alpha_4+\alpha_5}=-[f_4,f_5]$ by the conventions of \cite{GS} a direct verification shows that
$$f=6f_1+f_2+10f_3+f_5+2f_7+2f_8-6[f_2,f_4]-6[f_4,f_5].$$
Therefore, $(e,f)=6+1+10+1+2+2+6+6=34.$ 
 
The Lie algebra $\g_e(0)\cong \sl(2)$ is spanned by
$e':=e_{{{1\!\!\atop{}}{2\!\!\atop{}}{3\atop1}\!\!{2\!\!\atop{}}{2\!\!\atop{}}{2\!\!\atop{}}{1\!\!\atop{}}}}
-2e_{{{1\!\!\atop{}}{2\!\!\atop{}}{3\atop2}\!\!{3\!\!\atop{}}{2\!\!\atop{}}{1\!\!\atop{}}{0\!\!\atop{}}}}
-e_{{{1\!\!\atop{}}{2\!\!\atop{}}{3\atop2}\!\!{2\!\!\atop{}}{2\!\!\atop{}}{1\!\!\atop{}}{1\!\!\atop{}}}}
-e_{{{1\!\!\atop{}}{2\!\!\atop{}}{3\atop1}\!\!{3\!\!\atop{}}{2\!\!\atop{}}{1\!\!\atop{}}{1\!\!\atop{}}}}$, $f':=2f_{{{1\!\!\atop{}}{2\!\!\atop{}}{3\atop1}\!\!{2\!\!\atop{}}{2\!\!\atop{}}{2\!\!\atop{}}{1\!\!\atop{}}}}
-f_{{{1\!\!\atop{}}{2\!\!\atop{}}{3\atop2}\!\!{3\!\!\atop{}}{2\!\!\atop{}}{1\!\!\atop{}}{0\!\!\atop{}}}}
-f_{{{1\!\!\atop{}}{2\!\!\atop{}}{3\atop2}\!\!{2\!\!\atop{}}{2\!\!\atop{}}{1\!\!\atop{}}{1\!\!\atop{}}}}
-f_{{{1\!\!\atop{}}{2\!\!\atop{}}{3\atop1}\!\!{3\!\!\atop{}}{2\!\!\atop{}}{1\!\!\atop{}}{1\!\!\atop{}}}}$ and $h':=2\varpi_6^\vee$ where $\varpi^\vee_6(e_i)=\delta_{i,6}\, e_i$ for $1\le i\le 8$. The $4$-dimensional graded component $\g_e(3)$ 
is a direct sum of two $\g_e(0)$-modules of highest weights $1$. As in {\it loc.\,cit.} we choose 
\begin{eqnarray*}
	u&:=&e_{{{1\!\!\atop{}}{2\!\!\atop{}}{2\atop1}\!\!{1\!\!\atop{}}{1\!\!\atop{}}{1\!\!\atop{}}{0\!\!\atop{}}}}
	+e_{{{1\!\!\atop{}}{1\!\!\atop{}}{2\atop1}\!\!{1\!\!\atop{}}{1\!\!\atop{}}{1\!\!\atop{}}{1\!\!\atop{}}}}
	-e_{{{1\!\!\atop{}}{2\!\!\atop{}}{2\atop1}\!\!{2\!\!\atop{}}{1\!\!\atop{}}{0\!\!\atop{}}{0\!\!\atop{}}}}
	-2e_{{{1\!\!\atop{}}{1\!\!\atop{}}{2\atop1}\!\!{2\!\!\atop{}}{1\!\!\atop{}}{1\!\!\atop{}}{0\!\!\atop{}}}}
	+3e_{{{1\!\!\atop{}}{1\!\!\atop{}}{1\atop1}\!\!{1\!\!\atop{}}{1\!\!\atop{}}{1\!\!\atop{}}{1\!\!\atop{}}}}
	+e_{{{1\!\!\atop{}}{2\!\!\atop{}}{3\atop1}\!\!{2\!\!\atop{}}{1\!\!\atop{}}{0\!\!\atop{}}{0\!\!\atop{}}}}
	\end{eqnarray*} as a highest weight vector of one of these modules and set $v:=[f',u]$. 
By standard properties of the root system $\Phi$,
\begin{eqnarray*}
v&=&2[f_{{{1\!\!\atop{}}{2\!\!\atop{}}{3\atop1}\!\!{2\!\!\atop{}}{2\!\!\atop{}}{2\!\!\atop{}}{1\!\!\atop{}}}},
e_{{{1\!\!\atop{}}{2\!\!\atop{}}{2\atop1}\!\!{1\!\!\atop{}}{1\!\!\atop{}}{1\!\!\atop{}}{0\!\!\atop{}}}}]+
2[f_{{{1\!\!\atop{}}{2\!\!\atop{}}{3\atop1}\!\!{2\!\!\atop{}}{2\!\!\atop{}}{2\!\!\atop{}}{1\!\!\atop{}}}},
e_{{{1\!\!\atop{}}{1\!\!\atop{}}{2\atop1}\!\!{1\!\!\atop{}}{1\!\!\atop{}}{1\!\!\atop{}}{1\!\!\atop{}}}}]
+[f_{{{1\!\!\atop{}}{2\!\!\atop{}}{3\atop2}\!\!{3\!\!\atop{}}{2\!\!\atop{}}{1\!\!\atop{}}{0\!\!\atop{}}}},	e_{{{1\!\!\atop{}}{2\!\!\atop{}}{2\atop1}\!\!{2\!\!\atop{}}{1\!\!\atop{}}{0\!\!\atop{}}{0\!\!\atop{}}}}]+\\
&+&2[f_{{{1\!\!\atop{}}{2\!\!\atop{}}{3\atop2}\!\!{3\!\!\atop{}}{2\!\!\atop{}}{1\!\!\atop{}}{0\!\!\atop{}}}},	e_{{{1\!\!\atop{}}{1\!\!\atop{}}{2\atop1}\!\!{2\!\!\atop{}}{1\!\!\atop{}}{1\!\!\atop{}}{0\!\!\atop{}}}}]
-[f_{{{1\!\!\atop{}}{2\!\!\atop{}}{3\atop2}\!\!{2\!\!\atop{}}{2\!\!\atop{}}{1\!\!\atop{}}{1\!\!\atop{}}}},
e_{{{1\!\!\atop{}}{1\!\!\atop{}}{2\atop1}\!\!{1\!\!\atop{}}{1\!\!\atop{}}{1\!\!\atop{}}{1\!\!\atop{}}}}]
-3[f_{{{1\!\!\atop{}}{2\!\!\atop{}}{3\atop2}\!\!{2\!\!\atop{}}{2\!\!\atop{}}{1\!\!\atop{}}{1\!\!\atop{}}}},
e_{{{1\!\!\atop{}}{1\!\!\atop{}}{1\atop1}\!\!{1\!\!\atop{}}{1\!\!\atop{}}{1\!\!\atop{}}}}]+\\
&+&[f_{{{1\!\!\atop{}}{2\!\!\atop{}}{3\atop1}\!\!{3\!\!\atop{}}{2\!\!\atop{}}{1\!\!\atop{}}{1\!\!\atop{}}}},
e_{{{1\!\!\atop{}}{2\!\!\atop{}}{2\atop1}\!\!{2\!\!\atop{}}{1\!\!\atop{}}{0\!\!\atop{}}{0\!\!\atop{}}}}]
-[f_{{{1\!\!\atop{}}{2\!\!\atop{}}{3\atop1}\!\!{3\!\!\atop{}}{2\!\!\atop{}}{1\!\!\atop{}}{1\!\!\atop{}}}},
e_{{{1\!\!\atop{}}{2\!\!\atop{}}{3\atop1}\!\!{2\!\!\atop{}}{1\!\!\atop{}}{0\!\!\atop{}}{0\!\!\atop{}}}}].
\end{eqnarray*}
From \cite[Appendix]{LS} we get
 $$N_{\scriptstyle{{{{0\!\!\atop{}}{0\!\!\atop{}}{1\atop0}\!\!{1\!\!\atop{}}{1\!\!\atop{}}{1\!\!\atop{}}{1\!\!\atop{}}}}, {{{1\!\!\atop{}}{2\!\!\atop{}}{2\atop1}\!\!{1\!\!\atop{}}{1\!\!\atop{}}{1\!\!\atop{}}{0\!\!\atop{}}}}}}\,=\,
N_{\scriptstyle{{{{0\!\!\atop{}}{1\!\!\atop{}}{1\atop0}\!\!{1\!\!\atop{}}{1\!\!\atop{}}{1\!\!\atop{}}{0\!\!\atop{}}}}, {{{1\!\!\atop{}}{1\!\!\atop{}}{2\atop1}\!\!{1\!\!\atop{}}{1\!\!\atop{}}{1\!\!\atop{}}{1\!\!\atop{}}}}}}\,=\,
N_{\scriptstyle{{{{0\!\!\atop{}}{1\!\!\atop{}}{2\atop1}\!\!{1\!\!\atop{}}{1\!\!\atop{}}{0\!\!\atop{}}{0\!\!\atop{}}}}, {{{1\!\!\atop{}}{1\!\!\atop{}}{1\atop1}\!\!{1\!\!\atop{}}{1\!\!\atop{}}{1\!\!\atop{}}{1\!\!\atop{}}}}}}\,=\,
N_{\scriptstyle{{{{0\!\!\atop{}}{0\!\!\atop{}}{0\atop0}\!\!{1\!\!\atop{}}{1\!\!\atop{}}{1\!\!\atop{}}{1\!\!\atop{}}}}, {{{1\!\!\atop{}}{2\!\!\atop{}}{3\atop1}\!\!{2\!\!\atop{}}{1\!\!\atop{}}{0\!\!\atop{}}{0\!\!\atop{}}}}}}\,=\,-1,$$
$$N_{{\scriptstyle{{{{0\!\!\atop{}}{0\!\!\atop{}}{1\atop1}\!\!{1\!\!\atop{}}{1\!\!\atop{}}{1\!\!\atop{}}{0\!\!\atop{}}}}, {{{1\!\!\atop{}}{2\!\!\atop{}}{2\atop1}\!\!{2\!\!\atop{}}{1\!\!\atop{}}{0\!\!\atop{}}{0\!\!\atop{}}}}}}}\,=\,
N_{{\scriptstyle{{{{0\!\!\atop{}}{1\!\!\atop{}}{1\atop1}\!\!{1\!\!\atop{}}{1\!\!\atop{}}{0\!\!\atop{}}{0\!\!\atop{}}}}, {{{1\!\!\atop{}}{1\!\!\atop{}}{2\atop1}\!\!{1\!\!\atop{}}{1\!\!\atop{}}{1\!\!\atop{}}{1\!\!\atop{}}}}}}}\,=\,
N_{{\scriptstyle{{{{0\!\!\atop{}}{0\!\!\atop{}}{1\atop0}\!\!{1\!\!\atop{}}{1\!\!\atop{}}{1\!\!\atop{}}{1\!\!\atop{}}}}, {{{1\!\!\atop{}}{2\!\!\atop{}}{2\atop1}\!\!{2\!\!\atop{}}{1\!\!\atop{}}{0\!\!\atop{}}{0\!\!\atop{}}}}}}}\,=\,
N_{{\scriptstyle{{{{0\!\!\atop{}}{1\!\!\atop{}}{1\atop1}\!\!{1\!\!\atop{}}{1\!\!\atop{}}{0\!\!\atop{}}{0\!\!\atop{}}}}, {{{1\!\!\atop{}}{1\!\!\atop{}}{2\atop1}\!\!{2\!\!\atop{}}{1\!\!\atop{}}{1\!\!\atop{}}{0\!\!\atop{}}}}}}}\,=\,1.$$
Since $N_{\alpha,\beta}=-N_{-\alpha,-\beta}$ by \cite{GS},  a straightforward computation shows that
\begin{eqnarray*}
v&=&2f_{{{0\!\!\atop{}}{0\!\!\atop{}}{1\atop0}\!\!{1\!\!\atop{}}{1\!\!\atop{}}{1\!\!\atop{}}{1\!\!\atop{}}}}+
2f_{{{0\!\!\atop{}}{1\!\!\atop{}}{1\atop0}\!\!{1\!\!\atop{}}{1\!\!\atop{}}{1\!\!\atop{}}{0\!\!\atop{}}}}
-f_{{{0\!\!\atop{}}{0\!\!\atop{}}{1\atop1}\!\!{1\!\!\atop{}}{1\!\!\atop{}}{0\!\!\atop{}}{0\!\!\atop{}}}}
-2f_{{{0\!\!\atop{}}{1\!\!\atop{}}{1\atop1}\!\!{1\!\!\atop{}}{1\!\!\atop{}}{0\!\!\atop{}}{0\!\!\atop{}}}}+\\	
&+&f_{{{0\!\!\atop{}}{1\!\!\atop{}}{1\atop1}\!\!{1\!\!\atop{}}{1\!\!\atop{}}{0\!\!\atop{}}{0\!\!\atop{}}}}
-3f_{{{0\!\!\atop{}}{1\!\!\atop{}}{2\atop1}\!\!{1\!\!\atop{}}{1\!\!\atop{}}{0\!\!\atop{}}{\!\!\atop{}}}}-
f_{{{0\!\!\atop{}}{0\!\!\atop{}}{1\atop0}\!\!{1\!\!\atop{}}{1\!\!\atop{}}{1\!\!\atop{}}{1\!\!\atop{}}}}
-f_{{{0\!\!\atop{}}{0\!\!\atop{}}{0\atop0}\!\!{1\!\!\atop{}}{1\!\!\atop{}}{1\!\!\atop{}}{1\!\!\atop{}}}}\\
&=&-f_{{{0\!\!\atop{}}{0\!\!\atop{}}{0\atop0}\!\!{1\!\!\atop{}}{1\!\!\atop{}}{1\!\!\atop{}}{1\!\!\atop{}}}}
-f_{{{0\!\!\atop{}}{0\!\!\atop{}}{1\atop1}\!\!{1\!\!\atop{}}{1\!\!\atop{}}{0\!\!\atop{}}{0\!\!\atop{}}}}
-f_{{{0\!\!\atop{}}{1\!\!\atop{}}{1\atop1}\!\!{1\!\!\atop{}}{1\!\!\atop{}}{0\!\!\atop{}}{0\!\!\atop{}}}}
+2f_{{{0\!\!\atop{}}{1\!\!\atop{}}{1\atop0}\!\!{1\!\!\atop{}}{1\!\!\atop{}}{1\!\!\atop{}}{0\!\!\atop{}}}}
-3f_{{{0\!\!\atop{}}{1\!\!\atop{}}{2\atop1}\!\!{1\!\!\atop{}}{1\!\!\atop{}}{0\!\!\atop{}}{\!\!\atop{}}}}
+f_{{{0\!\!\atop{}}{0\!\!\atop{}}{1\atop0}\!\!{1\!\!\atop{}}{1\!\!\atop{}}{1\!\!\atop{}}{1\!\!\atop{}}}}.
\end{eqnarray*}
It is worth mentioning that $v$ also appears in the extended (unpublished) version of \cite{LT2} as a linear combination of vectors $v_{12}$ and $v_{14}$.

Since $u$ is a highest weight vector of weight $1$ for $\g_e(0)$ it must be that 
$[u,v]\in \g_e(6)^{\g_e(0)}$. By \cite[p.~150]{LT2}, the latter subspace is spanned by
$e_{{{1\!\!\atop{}}{1\!\!\atop{}}{1\atop1}\!\!{0\!\!\atop{}}{0\!\!\atop{}}{0\!\!\atop{}}{0\!\!\atop{}}}}+
e_{{{1\!\!\atop{}}{1\!\!\atop{}}{1\atop0}\!\!{1\!\!\atop{}}{0\!\!\atop{}}{0\!\!\atop{}}{0\!\!\atop{}}}}+2e_{{{0\!\!\atop{}}{1\!\!\atop{}}{2\atop1}\!\!{1\!\!\atop{}}{0\!\!\atop{}}{0\!\!\atop{}}{0\!\!\atop{}}}}$.
On the other hand, a rough calculation (ignoring the signs of structure constants) shows that $[u,v]$ is a linear combination of $e_{{{1\!\!\atop{}}{1\!\!\atop{}}{1\atop1}\!\!{0\!\!\atop{}}{0\!\!\atop{}}{0\!\!\atop{}}{0\!\!\atop{}}}}$ and
$e_{{{1\!\!\atop{}}{1\!\!\atop{}}{1\atop0}\!\!{1\!\!\atop{}}{0\!\!\atop{}}{0\!\!\atop{}}{0\!\!\atop{}}}}$.
This implies that
\begin{equation}
[u,v]=0.
\end{equation}
\subsection{Searching for a quadratic relation in $\mathbf{U(\g,e)}^{\mathbf{\mathrm ab}}$}
\label{ss6}
Similar to our discussion in (\ref{ss2}) we hope (with fingers crossed) that the element $[\Theta_u,\Theta_v]$ lies in
$\F_8(Q)\setminus \F_7(Q)$. For that purpose we have to look closely at the element $\varphi:=\{\theta_u,\theta_v\}\in \P_8(\g,e)$ Here, as before, $\theta_y$ denotes the $\F$-symbol of
$\Theta_y$ in the Poisson algebra $\P(\g,e)=\gr_{\F}(U(\g,e))$.

As in (\ref{ss2}) we identify $\P(\g,e)$ with the symmetric algebra $S(\g_e)$ and write $\mathcal{J}$ for the ideal of $\P(\g,e)$ generated by the graded subspace
$[\g_e(0),\g_e(2)]\oplus\textstyle{\sum}_{i\ne 2}\g_e(i)$. We know from \cite[p.~150]{LT2} that
$[\g_e(0),\g_e(2)]$ is an irreducible $\g_e(0)$-module of highest weight $4$ and 
$\g_e(2)=[\g_e(0),\g_e(2)]\oplus \g_e(2)^{\g_e(0)}$. Furthermore, $\g_e(2)^{\g_e(0)}$ is a $2$-dimensional subspace spanned by $e$ and $e_0:=e_2+e_5+e_7+e_8$. It follows that the factor-algebra $\bar{\P}(\g,e):=\P(\g,e)/\mathcal{J}$ is isomorphic to a 
polynomial algebra in $e$ and $e_0$. We let $\bar{\varphi}$ denote the image of $\varphi$ in $\bar{\P}(\g,e)$. Then
$$\bar{\varphi}=\lambda e^2+\mu ee_0+\nu e_0^2,$$
and the main results of \cite[Theorem~1.2]{PT21} imply that the scalars $\lambda,\mu,\nu$ lie in
the ring $R$.
Since is immediate from \cite[Prop.~2.1]{Pr14} and \cite[5.2]{PT21} that the largest commutative quotient of $U(\g,e)$ is generated by the image of $\Theta_e$ we would find a desired quadratic relation in $U(\g,e)^{\rm ab}$  if we managed to prove that the coefficient $\lambda$
of $\bar{\varphi}$ is nonzero. Indeed, let $I_c$ denote the $2$-sided ideal of $U(\g,e)$ generated by all commutators. If $\lambda\in R^\times$ then the element  $[\Theta_u,\Theta_v]\in I_c\cap Q_R$ has Kazhdan degree $8$ and is congruent to $\lambda \Theta_e^2$ modulo $I_c\cap U(\g_R,e)+ \F_7(Q_R)$. As it follows from \cite[Prop.~5.4]{PT21} that $$U(\g,e)\cap \F_7(Q_R)\subset R 1+R\Theta_e+I_c\cap U(\g_R,e)$$ the latter would imply that 
$\lambda\Theta_e^2+\eta\Theta_e+\xi 1\in I_c$ for some $\eta,\xi\in R$.
\begin{Lemma}\label{l1} We have that
	$[\g_e(1),\g_e(1)]^{\g_e(0)}=\C e_0$.
\end{Lemma}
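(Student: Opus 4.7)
The plan is to combine a dimension-counting argument, exactly parallel to the corresponding step in Subsection~\ref{ss2} for the orbit $\mathrm{A_5A_1}$, with an explicit bracket computation identifying the invariant line. Since the decomposition $\g_e = \C e \oplus [\g_e, \g_e]$ is $\g_e(0)$-stable ($[\g_e(0), e] = 0$ and $[\g_e, \g_e]$ is an ideal) and respects the $\tau$-grading, taking the degree-$2$ component gives $\g_e(2) = \C e \oplus [\g_e, \g_e](2)$. Non-negativity of the grading on $\g_e$ yields
\[
[\g_e, \g_e](2) \,=\, [\g_e(0), \g_e(2)] + [\g_e(1), \g_e(1)].
\]
Since $\g_e(0) \cong \sl(2)$ acts completely reducibly, $[\g_e(0), \g_e(2)]^{\g_e(0)} = 0$, so taking $\g_e(0)$-invariants gives $[\g_e, \g_e](2)^{\g_e(0)} = [\g_e(1), \g_e(1)]^{\g_e(0)}$. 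Applying invariants to $\g_e(2) = \C e \oplus [\g_e, \g_e](2)$ then yields $\g_e(2)^{\g_e(0)} = \C e \oplus [\g_e(1), \g_e(1)]^{\g_e(0)}$, whence $\dim[\g_e(1), \g_e(1)]^{\g_e(0)} = 1$.

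This invariant line is a 1-dimensional subspace of $\g_e(2)^{\g_e(0)} = \C e \oplus \C e_0$; because $[\g_e(1), \g_e(1)] \subseteq [\g_e, \g_e]$ while $e \notin [\g_e, \g_e]$, it is complementary to $\C e$, hence of the form $\C(e_0 + \alpha e)$ for some $\alpha \in \C$. To prove the lemma it remains to show $\alpha = 0$, equivalently, that $e_0 \in [\g_e, \g_e]$.

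The main obstacle is this last verification. My plan is to use the explicit description of $\g_e(1)$ as a $\g_e(0)$-module given in \cite[p.~150]{LT2} to produce a nonzero element of $(\Lambda^2 \g_e(1))^{\g_e(0)}$ and to compute its image under the $\g_e(0)$-equivariant bracket $\Lambda^2 \g_e(1) \to \g_e(2)^{\g_e(0)}$. Concretely, having identified the $\sl(2)$-isotypic decomposition of $\g_e(1)$, one pairs a highest weight vector $\xi$ in one irreducible summand with $(\ad f')^k\xi$ (or with a lowest weight vector from an isomorphic summand) to produce a bracket $[\xi, \eta] \in \g_e(2)$ of zero $\g_e(0)$-weight, then averages to extract its $\g_e(0)$-invariant part. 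Expanding this invariant bracket as a linear combination of root vectors via the structure constants $N_{\alpha,\beta}$ from \cite[Appendix]{LS} with the sign conventions of \cite{GS}, one inspects the coefficient of $e_1$: this root vector appears in $e$ with coefficient $1$ but does not appear in $e_0$, so the vanishing of the $e_1$-coefficient of the bracket forces the $e$-component to be zero. Non-vanishing of, say, the $e_2$-coefficient then confirms the bracket is a nonzero multiple of $e_0$, establishing $\alpha = 0$ and hence $[\g_e(1), \g_e(1)]^{\g_e(0)} = \C e_0$.
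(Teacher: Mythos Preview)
Your dimension count is correct and matches the paper's first step: using $\g_e=\C e\oplus[\g_e,\g_e]$ together with the $\g_e(0)$-module structure of $\g_e(2)$, one sees that $[\g_e(1),\g_e(1)]^{\g_e(0)}$ is one-dimensional and complementary to $\C e$ inside $\g_e(2)^{\g_e(0)}=\C e\oplus\C e_0$. (A small inaccuracy: by \cite[p.~150]{LT2} the module $\g_e(1)$ is already irreducible, isomorphic to $L(3)$, so there is no further isotypic decomposition to perform.)

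Where you diverge from the paper is in the identification $\alpha=0$. You propose to expand the invariant bracket explicitly via structure constants and inspect the $e_1$-coefficient. This would work in principle, but as written the calculation is only described, not carried out. The paper instead uses a weight argument that bypasses all root-vector computation. One introduces the derivation $t_3$ of $\g$ acting on $e_\gamma$ by the $\alpha_3$-coefficient $\nu_3(\gamma)$. Both roots occurring in the highest-weight vector $w$ of $\g_e(1)$ have $\nu_3=3$, and all roots in $f'$ have $\nu_3=2$, so $t_3(w)=3w$ and $t_3(f')=-2f'$. Hence every bracket $[(\ad f')^j(w),(\ad f')^{3-j}(w)]$ has $t_3$-weight $0$, and therefore so does the invariant. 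Since $t_3(e_0)=0$ whereas $t_3(e)=e_3\neq 0$, the invariant is forced to lie in $\C e_0$. Your $e_1$-approach cannot be streamlined in the same way: the two roots appearing in $w$ have \emph{different} $\alpha_1$-coefficients ($2$ and $1$), so $w$ is not a $t_1$-eigenvector and a genuine computation with structure constants would be unavoidable along your route.
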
 \begin{proof} It follows from \cite[4.4]{PS18} that $[\g_e,\g_e](2)=[\g_e(0),\g_e(2)]+[\g_e(1),\g_e(1)]$ has codimension $1$ in $\g_e(2)$. Hence
$[\g_e,\g_e](2)^{\g_e(0)}\ne \{0\}$. On the other hand,
 \cite[p.~150]{LT2} shows that $[\g_e(0),\g_e(2)]\cong L(4)$ and $[\g_e(1),\g_e(1)]$ is a homomorphic image of $\wedge^2 L(3)$, where $L(r)$ stands for the irreducible $\sl(2)$-module of highest weight $r$. This implies that the subspace $[\g_e(1),\g_e(1)]^{\g_e(0)}$
is $1$-dimensional. 

By \cite[p.~150]{LT2}, the $\g_e(0)$-module $\g_e(1)$ is generated by the highest weight vector
$$w:=e_{{{2\!\!\atop{}}{3\!\!\atop{}}{4\atop2}\!\!{4\!\!\atop{}}{3\!\!\atop{}}{2\!\!\atop{}}{1\!\!\atop{}}}}
-e_{{{1\!\!\atop{}}{3\!\!\atop{}}{5\atop3}\!\!{4\!\!\atop{}}{3\!\!\atop{}}{2\!\!\atop{}}{1\!\!\atop{}}}}.$$
Given a root $\gamma\in\Phi$ we write $\nu_3(\gamma)$ for the coefficient of $\alpha_3$ in the expression of $\gamma$ as a linear combination of the simple roots $\alpha_i\in\Pi$, and we denote by $t_3$ the derivation of $\g$ such that $t_3(e_\gamma)=\nu_3(\gamma)e_\gamma$ for all $\gamma\in\Phi$.
Then $t_3(w)=3w$ and $t_3(f')=-2f'$. Our preceding remarks show that the subspace $[\g_e(1),\g_e(1)]^{\g_e(0)}$ is spanned by a nonzero vector of the form 
$$a[(\ad\,f')^3(w),w]+b [(\ad\,f')^2(w),(\ad f')(w)]$$ with $a,b\in\C$. Since such a vector is a linear combination of $e$ and $e_0$ and lies in the kernel of $t_3$ we now deduce that
$[\g_e(1),\g_e(1)]^{\g_e(0)}=\C e_0$ as stated (one should keep in mind here that $t_3(e_0)=0$ and $t_3(e)=e_3\ne 0$).
\end{proof}
Let $h_0=[e_0,f]=h_2+h_5+2h_7+2h_8$. Since $[e,e_0]=0$ we have that $[h_0,e]=[[e_0,f],e]=[h,e_0]=2e_0.$
Since $h_0\in\t$, each $e_i$ is an eigenvector for $\ad h_0$ this forces $[h_0,e_0]=2e_0$. Next we set $f_0:=\textstyle{\frac{1}{2}}[f,[f,e_0]]=\textstyle{\frac{1}{2}}[h_0,f]
$  and observe that
$$[e,f_0]=\textstyle{\frac{1}{2}}\big([h,[f_,e_0]]+[f,[h,e_0]]\big)=[f,e_0]=-h_0.$$
Since $[f,e_0]=-h_0$ we get $[f_0,e_0]=\frac{1}{2}[[h_0,f],e_0]=[e_0,f]=-[e,f_0]$ which yields $$[f_0,[f_0,e]]=-[f_0,[f,e_0]]=-[f,[f_0,e_0]]=[f,[e,f_0]]=-[h,f_0]=2f_0.$$
As both $f$ and $f_0$ lie in $\g_f(-2)^{\g_e(0)}$, they are orthogonal to $[\g_e(0),\g_e(2)]$ with respect to our symmetric bilinear form $(\,\cdot\,,\,\cdot\,)$. 
Since $$(e_0,f_0)=(e_0,\textstyle{\frac{1}{2}}[h_0,f])=\textstyle{\frac{1}{2}}([e_0,h_0],f)=-(e_0,f)=-(1+1+2+2)=-6$$ we have that $(e_0,f+f_0)=0$. As
$(e,f_0) = \textstyle{\frac{1}{2}}(e,[f,[f,e_0]])=\textstyle{\frac{1}{2}}(h,[f,e_0]])=-(f,e_0)=-6$ we get $(e,f+f_0)=(e,f)-(e_0,f_0)=34-6=28$. Since the ideal $\mathcal{J}$ vanishes on 
$\g_f(-2)^{\g_e(0)}$ it follows that
\begin{equation}
	\label{varphi1}
	\varphi(f+f_0)=\bar{\varphi}(f+f_0)=\lambda(e,f+f_0)^2=2^47^2\lambda.\end{equation}  As in (\ref{varphi}) this indicates that we might expect some complications in characteristic $7$. 
\subsection{Computing $\mathbf{\lambda}$}
In order to determine $\lambda$ we use the method described in Subsections~\ref{ss3} and \ref{ss4}. We adopt the notation introduced there and put $E:=\ad e$, $E_0:=\ad e_0$,
$H:=\ad h$, $H_0:=\ad h_0$, $F=\ad f$ and $F_0:=\ad f_0$. Since $u$ and $v$ are in $\g_e(3)$ there exist
$u_-\in \C F^3(u)$ and $v_-\in \C F^3(v)$ such that $u=E^3(u_-)$ and $v=E^3(v_-)$. As $\g_e\cap \g(-5)=\{0\}$ it follows from the $\sl_2$-theory that the elements $u_-$ and $v_-$ lie in $\g_f(-3)$. Arguing as in Subsection~\ref{ss3} we observe that
$$\{\theta_u,\theta_v\}\,=\,\sum_{i=1}^{2s}[u,z_i^*][v,z_i]
+q(u,v)+\mbox{terms of standard degree} \ge 3.$$
Since
all terms of standard degree $\ge 3$ involved in $\{\theta_u,\theta_v\}$ have Kazhdan degree $8$  they must vanish at $f+f_0\in \g(-2)$. Since each quadratic monomial involved in $q(u,v)$ has a linear factor of standard degree $\ge 3$ we also have that $q(u,v)(f+f_0)=0$.
Using the $\g$-invariance of $(\,\cdot\,,\,\cdot\,)$ and the fact that $E^3(f+f_0)=0$  we get
$\{\theta_u,\theta_v\}(f+f_0)=$
\begin{eqnarray*}
&=&\sum_{i=1}^{2s}([u,z_i^*],f+f_0)([v,z_i], f+f_0)\,=\,
\sum_{i=1}^{2s}([E^3(u_-),z_i^*],f+f_0)([E^3(v_-),z_i],f+f_0)\\
&=&\sum_{i=1}^{2s}(z_i^*,[E^3(u_-),f+f_0])([z_i,E^3(v_-)f+f_0])\\
&=&\!\!\sum_{i=1}^{2s}(z_i^*,E^3([u_-,f+f_0])-
3E([E(u_-),h-h_0]))(z_i,E^3([v_-,f+f_0])-3E([E(v_-),h-h_0]))\\
&=&\!\!\sum_{i=1}^{2s}(e,[E^2([u_-,f+f_0])-3[E(u_-),h-h_0],z_i^*])(e,[E^2([v_-,f-f_1])-
3[E(v_-),h-h_0],z_i]).
\end{eqnarray*}
Here we used the fact that $E(f+f_0)=h+[e,f_0]=h-[e_0,f]=h-h_0$. 
As before, our choice of the $z_i^*$'s implies that $\langle x,y\rangle=\sum_{i=1}^{2s}\langle z_i^*,x\rangle\langle z_i,y\rangle$ for all $x,y\in \g(-1)$.
The definition of $\langle\,\cdot\,,\cdot\,\rangle$ then yields
that $\{\theta_u,\theta_v\}(f+f_0)=$
\begin{eqnarray}
&=&\big(e,\big[[E^2([u_-,f+f_0])-3[E(u_-),h-h_0],[E^2([v_-,f+f_0])-3[E(v_-),h-h_0]\big]\big)\\
&=&\big([E^3(u_-),f+f_0],E^2([v_-,f+f_0])-3[E(v_-),h-h_0]\big).
\end{eqnarray}
One should keep in mind here that $E^3([u_-,f+f_0])-3[e,[E(u_-),E(f-f_0)]]=[E^3(u_-),f+f_0]$ which holds since $E^3(f+f_0)=0$. As $E^4(u_-)=0$ the latter equals to
\begin{equation}\label{4.5}([u,E^2(f_0)],[v_-,f_0])-3(u,[f+f_0,[E(v_-),h-h_0]])\end{equation} thanks to the $\g$-invariance of $(\,\cdot\,,\,\cdot\,)$. Recall that $-h_0=[e,f_0]=[f,e_0]$ and $[h,f_0]=-2f_0$. Also, $[f,v_-]=0$ and $[f,f_0]=0$.
By the Jacobi identity, (\ref{4.5}) equals to 
\begin{eqnarray*}
&&([u,E^2(f_0)],[v_-,f_0])-3(u,[[[f+f_0,e],v_-],h-h_0])-3(u,[[e,[f+f_0,v_-]],h-h_0])\\
&-&3(u,[[e,v_-],2f+[f,[e,f_0]]]+2f_0+[f_0,[e,f_0]]])\\
&=&([u,[e,[f,e_0]]],[v_-,f_0])-3\big(u,(H+[E,F_0])^2(v_-)\big)-3([u,[e,[f_0,v_-]],h+[e,f_0])\\
&-&3(u,[E(v_-),2f+2f_0+2f_0-{F_0}^2(e)]).
\end{eqnarray*}
Since $h$ commutes with $[u,[e,[f_0,v_-]]$ the last expression equals
\begin{eqnarray*}
&&2([u,e_0],[v_-,f_0])-3\big(u,(H+[E,F_0])^2(v_-)\big)+3([u,[e,f_0]],[e,[f_0,v_-]])\\
&-&3(u,[E(v_-),2f+2f_0+2f_0-2f_0])\,=\,-2([[u,e_0],f_0],v_-)\\
&-&3\big(u,(H+[E,F_0])^2(v_-)\big)-3([u,E^2(f_0)],[f_0,v_-])+6([u,v_-],h+[e,f_0])\\
&=&\!\!\!-2([[u,e_0],f_0],v_-)-3\big((H-[E_0,F])^2(u),v_-)\big)-6([u,e_0],[f_0,v_-])+6((H-[E_0,F])(u),v_-)\\
&=&\!\!\!-8([f_0,[e_0,u]],v_-)-3\big((3-[E_0,F])^2(u),v_-\big)+6\big((3-[E_0,F])(u),v_-\big)\\
&=&\!\!\!-3\big(([E_0,F]^2-4[E_0,F]+3)(u),v_-\big)-8\big([f_0,[e_0,u]],v_-\big).
\end{eqnarray*}	
Finally, a GAP computation\footnote{Again, see \url{https://github.com/davistem/the_number_of_multiplicity-free_primitive_ideals/} for the code.} reveals that
$$-3\big(([E_0,F]^2-4[E_0,F]+3)(u),v_-\big)-8\big([f_0,[e_0,u]],v_-\big)\,=\,1176\,=\,2^3\cdot 3\cdot 7^2.$$ In view of (\ref{varphi1}) the factor $7^2$ gets cancelled and we obtain $\lambda=\frac{3}{2}\in R^\times$. Arguing as in Subsection~\ref{ss2} we now deduce that $U(\g,e)^{\rm ab}$ has dimension $2$.
\begin{Remark}
For safety, we have also used GAP to compute the expressions (4.3) and (4.4), and the number $1176$ was the output in both cases.	
\end{Remark}
\subsection{The modular case} In this subsection we prove Theorem~B. 
First suppose that $e$ has Bala--Carter label ${\rm A}_5+{\rm A}_1$. By \cite[3.16]{Pr14}, we then have 
\begin{eqnarray*}
\Lambda+\rho&=&\textstyle{\frac{1}{3}}\varpi_1+\textstyle{\frac{1}{3}}\varpi_2+
\textstyle{\frac{1}{6}}\varpi_3+ \textstyle{\frac{1}{6}}\varpi_4
+\textstyle{\frac{1}{6}}\varpi_5+\textstyle{\frac{1}{6}}\varpi_6+
\textstyle{\frac{1}{6}}\varpi_7+\textstyle{\frac{1}{6}}\varpi_8,\\ 
\Lambda'+\rho&=& \textstyle{\frac{1}{3}}\varpi_1+\textstyle{\frac{1}{3}}\varpi_2+\textstyle{\frac{1}{6}}\varpi_3+ \textstyle{\frac{7}{6}}\varpi_4
-\textstyle{\frac{11}{6}}\varpi_5+\textstyle{\frac{7}{6}}\varpi_6+\textstyle{\frac{1}{6}}\varpi_7+
\textstyle{\frac{1}{6}}\varpi_8.
\end{eqnarray*}
In view of \cite[Planche~VII]{B}, we get $\Lambda+\rho\,=\,\textstyle{\frac{1}{6}}(\varpi_1+\varpi_2)+\textstyle{\frac{1}{6}}\rho\,=\,$
$$
\,=\,\textstyle{\frac{1}{3}}\varepsilon_8+\frac{1}{12}(\varepsilon_1+\varepsilon_2+\varepsilon_3+
\varepsilon_4+
\varepsilon_5+\varepsilon_6+\varepsilon_7+5\varepsilon_8) 
+\textstyle{\frac{1}{6}}(\varepsilon_2+2\varepsilon_3+3\varepsilon_4+4\varepsilon_5+
5\varepsilon_6+6\varepsilon_7+23\varepsilon_8).
$$
Using the standard coordinates of $\mathbb{R}^8$ we obtain
$\Lambda+\rho=\frac{1}{12}(1,3,5,7,9,11,13,55)$.

Next we observe that $\Lambda'+\rho\,=\,\Lambda+\rho+\varpi_4-2\varpi_5+\varpi_6$. Since
$\varpi_4-2\varpi_5+\varpi_6\,=\,$
$$\,=\,(0,0,1,1,1,1,1,5)-2(0,0,0,1,1,1,1,4)+(0,0,0,0,1,1,1,3)
\,=\,(0,0,1,-1,0,0,0)$$ by \cite[Planche~VII]{B}, we have $\Lambda'+\rho=\frac{1}{12}(1,3,17,-5,9,11,13,55)$. It follows that
$$(\Lambda+\rho\,\vert\, \Lambda+\rho)-(\Lambda'+\rho\,\vert\, \Lambda'+\rho)\,=\,
\textstyle{\frac{1}{144}}\big((5^2-17^2)+(7^2-5^2)\big)\,=\,
\textstyle{\frac{1}{144}}(7^2-17^2))\,=\,-\textstyle{\frac{5}{3}}.$$

Now suppose that $e$ has Bala--Carter label ${\rm D}_5({\rm a}_1)+{\rm A}_2$. By \cite[3.17]{Pr14}, 
\begin{eqnarray*}
	\Lambda+\rho&=&\textstyle-{\frac{1}{4}}\varpi_1-\textstyle{\frac{1}{4}}\varpi_2-
	\textstyle{\frac{1}{4}}\varpi_3+ \varpi_4
	-\textstyle{\frac{1}{4}}\varpi_5+\varpi_6+
	-\textstyle{\frac{1}{4}}\varpi_7-\textstyle{\frac{1}{4}}\varpi_8,\\ 
	\Lambda'+\rho&=& -\textstyle{\frac{1}{4}}\varpi_1-\textstyle{\frac{1}{4}}\varpi_2-\textstyle{\frac{1}{4}}\varpi_3+ 2\varpi_4
	-\textstyle{\frac{9}{4}}\varpi_5+2\varpi_6-\textstyle{\frac{1}{4}}\varpi_7-
	\textstyle{\frac{1}{4}}\varpi_8.
\end{eqnarray*}
Hence $\Lambda+\rho=-\frac{1}{4}\rho+\frac{5}{4}(\varpi_4+\varpi_6)\,=\,$
$$
\,=\,-\textstyle{\frac{1}{4}}(0,1,2,3,4,5,6,23)+\textstyle{\frac{5}{4}}(0,0,1,1,2,2,2,8)\,=\,
\textstyle{\frac{1}{4}}(0,-1,3,2,6,5,4,17).$$ Similarly,  $\Lambda'+\rho=-\frac{1}{4}\rho+\frac{9}{4}(\varpi_4+\varpi_6)-2\varpi_5\,=\,$
$$
=-\textstyle{\frac{1}{4}}(0,1,2,3,4,5,6,23)+\textstyle{\frac{9}{4}}(0,0,1,1,2,2,2,8)-
(0,0,0,0,2,2,2,8)=
\textstyle{\frac{1}{4}}(0,-1,7,6,-3,-4,-5,17).$$
Therefore,
$$(\Lambda+\rho\,\vert\, \Lambda+\rho)-(\Lambda'+\rho\,\vert\, \Lambda'+\rho)\,=\,
\textstyle{\frac{1}{16}}(2^2-7^2)\,=\,-\textstyle{\frac{45}{16}}.$$ This shows that in both cases
the element $(\Lambda+\rho\,\vert\, \Lambda+\rho)-(\Lambda'+\rho\,\vert\, \Lambda'+\rho)$ is  invertible in $R$. We set $r:=(\Lambda'+\rho\,\vert\, \Lambda'+\rho)-(\rho\,\vert\,\rho)$ and 
$r':=(\Lambda'+\rho\,\vert\, \Lambda'+\rho)-(\rho\,\vert\,\rho)$.  Clearly, $r,r'\in R$.

Since the ideals $I(\Lambda)$ and $I(\Lambda')$ are multiplicity-free, our discussion in the introduction shows that $I(\Lambda)={\rm Ann}_{\,U(\g)}\,V$ and 
$I(\Lambda')={\rm Ann}_{\,U(\g)\,}V'$
for some $1$-dimensional $U(\g,e)$-modules $V$ and $V'$. There exist $2$-sided ideals $I$ and $I'$ of codimension $1$ in $U(\g,e)$ such that $V=U(\g,e)/I$ and $V'=U(\g,e)/I'$. As $L(\Lambda)$ and $L(\Lambda')$ are highest weight modules, we can find a Casimir element $C\in U(\g_R)$ which acts on $L(\Lambda)$  and $L(\Lambda')$ as $r{\rm Id}$ and
$r'{\rm Id}$, respectively.

Obviously, $C-r\in I$, $C-r'\in I'$, and the ideals $I$ and $I'$ contain all commutators in $U(\g,e)$. Put $I_R:=I\cap U(\g_R,e)$, $I_R':=I'\cap U(\g_R,e)$ and $V_R:=U(\g_R,e)/I_R$, $V'_R:=U(\g_R,e)/I_R'$. It follows from \cite[Proposition~5.4]{PT21} that $U(\g_R,e)=R\,1\oplus I_R$ and $U(\g_R,e)=R\,1\oplus I_R'$. 

To ease notation we identify $e$ with its image in $\g_\k=\g_R\otimes_R\k$ (this will cause no confusion).  Following \cite{GT18} we let $U(\g_\k,e)$ denote the modular finite $W$-algebras assiciated with the pair $(\g_\k,e)$. By \cite[Theorem~1.2(1)]{PT21},
we have that $U(\g_\k,e)\cong U(\g_R,e)\otimes_R\k$ as $\k$-algebras. Our computations in 
Subsections~\ref{ss3} and \ref{ss4} imply that the image of $C$ in the largest commutative quotient
of $U(\g_\k,e)$ satisfies a non-trivial quadratic equation. As a consequence, $U(\g_\k,e)$ cannot have more 
than two $1$-dimensional representations. On the other hand, the formulae for $r-r'$ obtained earlier yield that in each case the image of $r-r'$ in $R/pR\subset\k$ is nonzero for any good prime $p$ of $G_\Z$. This entails that
$V_\k:=V_R\otimes_R\k$ and $V_\k':=V_R'\otimes_R\k$ are the only non-equivalent $1$-dimensional representations of $U(\g_\k,e)$.

Given $\xi\in\g_\k^*$ we let $\g_\k^\xi$ denote the coadjoint stabiliser of $\xi$ in $\g_\k$.
As explained in \cite[8.1]{GT18} the modular finite $W$-algebra $U(\g_\k,e)$ contains a large 
central subalgebra $Z_p(\g_\k,e)$ isomorphic to a polynimial algebra in 
$\dim \g_\k^\chi$ variables. The algebra $U(\g_\k,e)$ is free $Z_p(\g_\k,e)$-module of rank $p^{\dim\g_\k^\chi}$ and the maximal spectrum of $Z_p(\g_\k,e)$ identifies with
a Frobernius twist of a {\it good transverse slice} $\mathbb{S}_\chi=\chi+\tilde{\kappa}(\mathfrak{o})$ to the coadjoint orbit of $\chi$. Here $\tilde{\kappa}\colon\,\g_\k\to \g_\k^*$ is the $G_\k$-module isomorphism induced by the Killing form $\kappa$ and $\mathfrak{o}$ is a graded subspace of 
$\bigoplus_{i\le 0}\,\g_\k(i)$  complementary to the tangent space $T_e((\Ad G_\k)\,e)=[e,\g_\k]$.

Every $\xi\in \mathbb{S}_\chi$ gives rise to a maximal ideal $J_\xi$ of $Z_p(\g_\k,e)$ which leads to a {\it $p$-central reduction} $$U_\xi(\g_\k,e)\,:=\,U(\g_\k,e)/J_\xi\, U(\g_\k,e)\,\cong\,
U(\g_\k,e)\otimes_{\,Z_p(\g_\k,e)}\,\k_\xi.$$ By \cite[Lemma~2.2(iii)]{Pr10} and
\cite[Sections~8 and 9]{GT18}, for every $\xi\in\mathbb{S}_\chi$ we have an algebra isomorphism
\begin{equation}\label{mat}
U_\xi(\g_\k)\,\cong\,{\rm Mat}_{p^{d(\chi)}}(U_\xi(\g_\k,e)).
\end{equation}
The $1$-dimensional $U(\g_\k,e)$-modules $V_\k$ and $V_\k'$ are annihilated by some maximal ideals
$J_\eta$ and $J_{\eta'}$ of $Z_p(\g_\k,e)$. Therefore, $V_\k$and $V_\k'$ are $1$-dimensional modules over the $p$-central reductions $U_\eta(\g_k,e)$ and $U_{\eta'}(\g_\k,e)$, respectively.  
By (\ref{mat}), the reduced enveloping algebras
 $U_\eta(\g_\k)$ and $U_{\eta'}(\g_\k)$ with $\eta,\eta'\in \mathbb{S}_\chi$ afford simple modules of dimension $p^{d(\chi)}$; we call them $\widetilde{V}_\k$ and $\widetilde{V}_\k'$. As explained in \cite[Lemma~2.2(iii)]{Pr10} and \cite[Sections~8 and 9]{GT18} we may assume further that the $U(\g_\k)$-modules $\widetilde{V}_\k$ and $\widetilde{V}_\k'$ are generated by their $1$-dimensional subspaces $V_\k$ and $V_\k'$, respectively.
 
 At this point we invoke a contracting $\k^\times$-action on $\mathbb{S}_\chi$ given by 
 $\mu(t)\cdot \xi=t^{-2}(\Ad^* \tau(t))\,\xi$ for all $t\in\k^\times$ and $\xi\in\mathbb{S}_\chi$.
 It shows, in particular, that $\dim (\Ad G_\k)\,\xi\ge \dim (\Ad G_\k)\,\chi$ for every $\xi\in \mathbb{S}_\chi$. In conjunction with the main result of \cite{Pr95} this entails that 
 $\dim (\Ad G_\k)\,\eta=\dim (\Ad G_\k)\,\eta'=\dim (\Ad G_\k)\,\chi$. 
 By \cite[Theorem~3.8]{PS18}, the $G_\k$-orbit of $e$ is rigid in $\g_\k$.
 Therefore, $\chi$ lies in a single sheet of $\g_\k^*$ which coincides with the coadjoint orbit of $\chi$.  Since the contracting action of $\mu(\k^\times)$ on $\mathbb{S}_\chi$ now shows that both $\eta$ and $\eta'$ lie in the only sheet of $\g_\k^*$ containing $\chi$, we deduce that 
 $\chi =(\Ad^* g)\,\eta$ and $\chi=(\Ad^* g')\,\eta'$ for some $g,g'\in G_\k$.

Given $\xi\in\g_\k^*$ we denote by $I_\xi$ the $2$-sided ideal of $U(\g_\k)$ generated by all elements $x^p-x^{[p]}-\xi(x)^p$ with $x\in \g_\k$. It is well-known (and easy to check) that for any $y\in G_\k$ the automorphism $\Ad y$ of $U(\g_\k)$ sends $I_\xi$ onto $I_{(\Ad^* y)\,\xi}$
and thus gives rise to an algebra isomorphism between the respective reduced enveloping algebras. 
The image $C_\k$ of our Casimir element $C$ in $U(\g_\k)=U(\g_R)\otimes_R \k$ lies in the Harish-Chandra centre of $U(\g_k)$. Hence $(\Ad y)(C_\k-a)=C_\k-a$ for all $y\in G_\k$ and $a\in \k$.

Let $\tilde{I}$ and $\widetilde{I}'$ denote the annihilators of $\widetilde{V}_\k$ and $\widetilde{V}'_\k$ in $U(\g_\k)$, and write $\bar{r}$ and $\bar{r}'$ for the images of $r$ and $r'$ in $\k$. The above discussion shows that $\tilde{I}$ contains $I_\eta$ and $C_\k-\bar{r}$ whereas $\tilde{I}'$ contains $I_{\eta'}$ and $C_\k-\bar{r}'$. By construction, $\tilde{I}/I_\eta$ and 
$\tilde{I}'/I_{\eta'}$ have codimension $p^{2d(\chi)}$ in $U_\eta(\g_\k)$ and $U_{\eta'}(\g_\k)$, respectively. Hence the $2$-sided ideals $(\Ad g)(\tilde{I})/(\Ad g)(I_\eta)=(\Ad g)(\tilde{I})/
I_\chi$ and $(\Ad g')(\tilde{I}')/(\Ad g)(I_{\eta'})=(\Ad g)(\tilde{I}')/
I_\chi$ have codimension $p^{2d(\chi)}$ in $U_\chi(\g_\k)=U(\g_\k)/I_\chi$. These ideals are distinct since $(\Ad g)(C_\k)=(\Ad g')(C_\k)=C_\k$ and $\bar{r}\ne \bar{r}'$. Thanks to the main result of \cite{Pr95} this yields that 
$U_\chi(\g_\k)$ has at least two simple modules of dimension $p^{d(\chi)}$. 
On the other hand, being a homomorphic image of $U(\g_\k,e)$ the algebra $U_\chi(\g_\k,e)$ cannot have more than two $1$-dimensional representations. Applying (\ref{mat}) with $\xi=\chi$ we finally deduce that $U_\chi(\g_\k)$ has exactly two simple modules of dimension $p^{d(\chi)}$. This completes the proof of Theorem~B.

\end{document}